\def\RR{\mathbb R}
\def\NN{\mathbb N}
\def\uu{\mathcal U}
\def\ff{\mathcal F}
\def\ss{\mathcal S}
\newcommand{\set}[1]{\left\lbrace #1\right\rbrace}%set
\providecommand{\abs}[1]{\left\lvert#1\right\rvert}
\newcommand{\remove}[1]{ }
\newcommand{\qtq}[1]{\quad \text{#1}\quad }%\quad\text{and}\quad
\newtheorem{theorem}{Theorem}[section]
\newtheorem{proposition}[theorem]{Proposition}
\newtheorem{lemma}[theorem]{Lemma}
\newtheorem{corollary}[theorem]{Corollary}
\theoremstyle{definition}
\theoremstyle{remark}
\newtheorem{remark}[theorem]{Remark}
\newtheorem{example}[theorem]{Example}
\numberwithin{equation}{section}
\numberwithin{figure}{section}
\begin{document}
\title{Expansions in multiple bases over general alphabets}
\author{Yuru Zou}
\address{College of Mathematics and Statistics,
Shenzhen University,
Shenzhen 518060,
People’s Republic of China}
\email{yuruzou@szu.edu.cn}

\subjclass[2010]{Primary: 11A63, Secondary: 37B10}
\keywords{Expansions in non-integer bases, multiple expansions}
\author{Vilmos Komornik}
\address{College of Mathematics and Statistics,
Shenzhen University,
Shenzhen 518060,
People’s Republic of China,
and
Département de mathématique,
Université de Strasbourg,
7 rue René Descartes,
67084 Strasbourg Cedex, France}
\email{komornik@math.unistra.fr}
\author{Jian Lu}
\address{College of Mathematics and Statistics,
Shenzhen University,
Shenzhen 518060,
People’s Republic of China}
\email{jianlu@szu.edu.cn}

%\curraddr{}
\date{Version 2021-02-19}

%\thanks{*Corresponding author; email address: komornik@math.unistra.fr.}
\thanks{This work was supported by the National Natural Science Foundation
of China (NSFC)  \#11871348, \#61972265.
}

\begin{abstract}\mbox{}
Expansions in non-integer bases have been extensively investigated since a pioneering work of R\'enyi.
We introduce a
more general framework of alphabet-base systems that also includes Pedicini's general alphabets and the multiple-base expansions of Neunhäuserer and Li.
We extend the Parry type lexicographic theory to this setup,
and we improve and generalize various former results on unique expansions.
\end{abstract}
\maketitle

\section{Introduction}\label{s1}

Since the pioneering work of R\'enyi \cite{Renyi1957} many papers have been devoted to the study of expansions of the form
\begin{equation*}
x=\sum_{i=1}^{\infty}\frac{c_i}{q^i},\quad (c_i)\in\set{0,1,\ldots,M}^{\NN}
\end{equation*}
in some real \emph{base} $q>1$ over the \emph{alphabet} $\set{0,1,\ldots,M}$, where $M$ is a given positive integer.
See, e.g.,
\cite{
DK2009,
DKL2016,
K2011,
KKL2017,
KL2007,
KLZ2017,
ZK2015,
ZWLB2016,
ZLLK2020}
and their references.
One of the main tools in these investigations is the lexicographic characterization of specific expansions; this fruitful idea was introduced by Parry \cite{Parry1960}.
A new impetus was given by the discovery of surprising unique expansions by Erd\H os et al. \cite{EHJ1991}.
A number of results have been extended to more general alphabets by Pedicini \cite{Ped2005}, and recently to expansions in multiple bases by Neunhäuserer 
\cite{Neunhauserer2019,
Neunhauserer2019b}
and Li \cite{Li2019}.
The purpose of this paper is to introduce a broad generalization of these frameworks, and to improve several theorems on multiple-base expansions.

We fix a finite \emph{alphabet--base system} $S:=\set{(d_0,q_0),\ldots,(d_M,q_M)}$ of couples of real numbers with $q_0,\ldots,q_M>1$, and
we  consider series of the form
\begin{equation*}
\pi((j_i))
=\pi(j_1j_2\cdots)
:=\sum_{i=1}^{\infty}\frac{d_{j_i}}{q_{j_1}\cdots q_{j_i}},
\end{equation*}
corresponding to the sequences $(j_i)=j_1j_2\cdots\in\set{0,\ldots,M}^{\NN}$.
For example,
\begin{equation}\label{11}
\pi(j^\infty)=\frac{d_j}{q_j-1},\quad j=0,\cdots, M;
\end{equation}
here $j^\infty$ denotes the constant sequence $jj\cdots$.
Henceforth by a \emph{sequence} we always mean an element of $\set{0,\ldots,M}^{\NN}$.
A sequence $(j_i)$ is called an \emph{expansion} of a real number $x$ if
$\pi((j_i))=x$.

Our setup reduces to the case of \cite{Ped2005} if $q_1=\cdots=q_M$,  to the framework of \cite{Neunhauserer2019,Li2019} if $\set{d_0,\ldots,d_M}=\set{0,\ldots,M}$, and to the classical situation if both conditions are satisfied.

In order to obtain deeper results we restrict ourselves to \emph{regular} alphabet--base systems, i.e., setting
\begin{equation}\label{12}
\lambda:=\min_j\pi(j^\infty)
\qtq{and}
\Lambda:=\max_j\pi(j^\infty),
\end{equation}
we assume  the following properties:
\begin{align}
&\frac{d_0+\lambda}{q_0}
<\frac{d_1+\lambda}{q_1}<\cdots<
\frac{d_M+\lambda}{q_M};\label{13}\\
&\frac{d_0+\Lambda}{q_0}<\frac{d_1+\Lambda}{q_1}
<\cdots<
\frac{d_M+\Lambda}{q_M};\label{14}\\
&\frac{d_j+\Lambda}{q_j}
\ge\frac{d_{j+1}+\lambda}{q_{j+1}}
\qtq{for all}j<M.\label{15}
\end{align}
They are satisfied by the \emph{classical system}
\begin{equation}\label{16qqq}
S_{M, q}:=\set{(0, q), \ldots, (M, q)}
\qtq{with}q\in(1, M+1],
\end{equation}
and they reduce to the ``Pedicini condition'' in the more general  equal base case.

First we generalize the $\beta$-expansions of R\'enyi \cite{Renyi1957}.
We introduce the usual lexicographic order between sequences  $(j_i), (k_i)$: we write $(j_i)\prec (k_i)$ or $(k_i)\succ (j_i)$ if there exists an index $n$ such that $j_i=k_i$ for all $i<n$, and $j_n<k_n$, and we write $(j_i)\preceq (k_i)$ or $(k_i)\succeq (j_i)$ if either $(j_i)\prec (k_i)$ or $(j_i)=(k_i)$.
A sequence $(j_i)$ is called  \emph{finite} if there exists a last index $j_n>0$, and \emph{infinite} otherwise.
Similarly, a sequence $(j_i)$ is called  \emph{co-finite} if there exists a last index $j_n<M$, and \emph{co-infinite} otherwise.

\begin{theorem}\label{t11}
Let $S$ be  a regular alphabet--base system.
\begin{enumerate}[\upshape (i)]
\item If $x\notin[\lambda,\Lambda]$, then $x$ has no expansion.
\item If $x\in[\lambda,\Lambda]$ then $x$ has a lexicographically largest expansion, a lexicographically largest infinite expansion, a lexicographically smallest expansion, and a lexicographically smallest co-infinite expansion.
\end{enumerate}
\end{theorem}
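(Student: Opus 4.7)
The first step is to pin down the extremes: I would show that (1.3) forces $\lambda=\pi(0^\infty)=d_0/(q_0-1)$ and (1.4) forces $\Lambda=\pi(M^\infty)=d_M/(q_M-1)$. Indeed, $(d_j+\lambda)/q_j\ge\lambda$ is equivalent to $d_j/(q_j-1)\ge\lambda$, so the minimum of the left-hand sides, which is attained uniquely at $j=0$ by (1.3), coincides with the minimum $\lambda$ of the right-hand sides, forcing $d_0/(q_0-1)=\lambda$; symmetrically for $\Lambda$. Part (i) then follows by a contraction argument: the affine maps $f_j(y):=(d_j+y)/q_j$ send $[\lambda,\Lambda]$ into itself by (1.3)-(1.4), and every $\pi((j_i))$ is the unique common point of the nested compact intervals $f_{j_1}\circ\cdots\circ f_{j_n}([\lambda,\Lambda])$ (whose lengths shrink by the factor $(q_{j_1}\cdots q_{j_n})^{-1}\to 0$), so $\pi((j_i))\in[\lambda,\Lambda]$.

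For (ii) I would run four parallel algorithms on $x\in[\lambda,\Lambda]$, setting $x_0:=x$ and $x_n:=q_{j_n}x_{n-1}-d_{j_n}$ after each digit choice. The \emph{greedy} choice takes $j_n$ maximal with $(d_j+\lambda)/q_j\le x_{n-1}$; this gives $x_n\ge\lambda$ directly, while (1.5) combined with the maximality of $j_n$, treated separately in the cases $j_n=M$ and $j_n<M$, gives $x_n\le\Lambda$. The telescoping identity
\[
x=\sum_{i=1}^{n}\frac{d_{j_i}}{q_{j_1}\cdots q_{j_i}}+\frac{x_n}{q_{j_1}\cdots q_{j_n}},
\]
together with $(q_{j_1}\cdots q_{j_n})^{-1}\to 0$, yields $\pi((j_i))=x$, and the maximality at each step precludes any strictly larger expansion. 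The \emph{anti-greedy} choice (smallest $j$ with $(d_j+\Lambda)/q_j\ge x_{n-1}$) delivers the lex-smallest expansion by the mirror argument.

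For the lex-largest \emph{infinite} expansion I would use the \emph{quasi-greedy} variant: when $x_{n-1}>\lambda$, take $j_n$ maximal with $q_jx_{n-1}-d_j>\lambda$ strictly (some $j$ works, because $j=0$ gives $q_0x_{n-1}-d_0>q_0\lambda-d_0=\lambda$), and when $x_{n-1}=\lambda$ set $j_n=0$. The same (1.5) argument gives $x_n\le\Lambda$, and a one-step induction shows $x_n>\lambda$ whenever $x_0>\lambda$. The sequence has infinitely many positive digits: if $j_n=0$ for all $n\ge N$, then $x_n-\lambda=q_0^{n-N}(x_N-\lambda)\to\infty$, violating $x_n\le\Lambda$. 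If some infinite expansion $(k_i)$ were strictly larger than $(j_i)$, at the first disagreement position $n$ one would have $k_n>j_n$, and the strict maximality of $j_n$ would force $q_{k_n}x_{n-1}-d_{k_n}=\lambda$, so $\pi(k_{n+1}k_{n+2}\cdots)=\lambda$; but the only expansion of $\lambda$ is $0^\infty$ (by the same (1.3)-based argument applied inductively), so $(k_i)$ would end in zeros, contradicting its infiniteness. The lex-smallest co-infinite expansion is produced by the mirror quasi-anti-greedy (smallest $j$ with $q_jx_{n-1}-d_j<\Lambda$, etc.).

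The main obstacle is the quasi-greedy analysis: one must simultaneously verify that the recursion stays inside $[\lambda,\Lambda]$, that the resulting sequence truly has infinitely many positive digits, and that it dominates every other infinite expansion. All three points rest on the strict inequalities in (1.3)-(1.4): strictness is what forces the uniqueness of the expansion $0^\infty$ of $\lambda$ and what prevents the iterate $x_n$ from being permanently trapped in the small interval $(\lambda,(d_1+\lambda)/q_1]$ where only $j_n=0$ would be chosen.
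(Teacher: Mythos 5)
Your proposal is correct, and for part (ii) it takes a genuinely different route from the paper. For part (i) the content is the same: your nested-interval/IFS formulation with $f_j(y)=(d_j+y)/q_j$ is just a repackaging of the paper's induction in Lemma \ref{l21}(ii) (which, note, needs only $\lambda\le\pi(j^\infty)\le\Lambda$, i.e.\ no regularity at all), and your identification $\lambda=d_0/(q_0-1)$, $\Lambda=d_M/(q_M-1)$ is the paper's \eqref{210}. For part (ii), however, the paper proceeds in two stages: Lemma \ref{l22} produces $b,a,l,m$ as lexicographic extrema of the sets of \emph{subexpansions} and \emph{superexpansions} by a compactness/digit-maximization argument valid for every alphabet-base system, and only then does Proposition \ref{p25} invoke regularity to show these extrema are actual expansions of $x$. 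You instead run the classical R\'enyi--Parry transformation $x_n=q_{j_n}x_{n-1}-d_{j_n}$ with greedy, quasi-greedy, lazy and quasi-lazy digit choices, verify invariance of $[\lambda,\Lambda]$, and prove lexicographic extremality directly. The two arguments use hypothesis \eqref{15} at exactly the same point (your bound $x_n\le\Lambda$ when $j_n<M$ is the computation in the proof of Proposition \ref{p25}(ia)), and your observation that $\lambda$ has only the expansion $0^\infty$ is the paper's \eqref{211}. What the paper's detour buys is the sub/superexpansion and reflection machinery that is reused later (Lemma \ref{l23}, Proposition \ref{p33}); what your route buys is a shorter, self-contained and fully constructive proof of Theorem \ref{t11} itself, at the cost of having to handle the quasi-greedy/quasi-lazy cases by hand (which you do correctly, including the fixed-point behaviour at $x_{n-1}=\lambda$ and the escape argument $x_n-\lambda=q_0^{n-N}(x_N-\lambda)\to\infty$ ruling out a tail of zeros when $x>\lambda$).
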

The expansions in Theorem \ref{t11} are called the \emph{greedy, quasi-greedy, lazy} and \emph{quasi-lazy} expansions of $x$, respectively, and they are denoted by $b(x), a(x), l(x)$ and $m(x)$.

Next we generalize a theorem of Parry \cite{Parry1960} on the lexicographic characterization of $\beta$-expansions.
Let us introduce the following special sequences:
\begin{align}
&\alpha^j
:=a\left(q_j\left(\frac{d_{j+1}+\lambda}{q_{j+1}}-\frac{d_j}{q_j}\right)\right),\quad j=0,\ldots,M-1,\label{16}\\
&\gamma^j
:=m\left(
q_{j}\left(\frac{d_{j-1}+\Lambda}{q_{j-1}}-\frac{d_{j}}{q_{j}}\right)
\right),\quad j=1,\ldots,M.\label{17}
\end{align}
They are well defined by the regularity of $S$: see Lemma \ref{l31} below.
The following theorem  answers a question of Li \cite{Li2019}:

\begin{theorem}\label{t12}
Let $S$ be  a regular alphabet--base system.
\begin{enumerate}[\upshape (i)]
\item The \emph{greedy map} $x\mapsto b(x)$ is an increasing bijection of $[\lambda,\Lambda]$ onto the set of all sequences $(j_i)$ satisfying the following condition:
\begin{equation}\label{18}
(j_{n+i})\prec \alpha^{j_n}\qtq{whenever}j_n<M.
\end{equation}
\item The \emph{quasi-greedy map} $x\mapsto a(x)$ is an increasing bijection of $[\lambda,\Lambda]$ onto the set of all infinite sequences $(j_i)$ satisfying the following condition:
\begin{equation*}
(j_{n+i})\preceq\alpha^{j_n}\qtq{whenever}j_n<M.
\end{equation*}
\item The \emph{lazy map} $x\mapsto l(x)$ is an increasing bijection of $[\lambda,\Lambda]$ onto the set of all sequences $(j_i)$ satisfying the following  condition:
\begin{equation}\label{110}
(j_{n+i})\succ\gamma^{j_n}
\qtq{whenever}j_n>0.
\end{equation}
\item The \emph{quasi-lazy map} $x\mapsto m(x)$ is an increasing bijection of $[\lambda,\Lambda]$ onto the set of all co-infinite sequences $(j_i)$ satisfying the following condition:
\begin{equation*}
(j_{n+i})\succeq \gamma^{j_n}
\qtq{whenever}j_n>0.
\end{equation*}
\end{enumerate}
\end{theorem}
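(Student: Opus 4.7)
My plan is to prove parts (i) and (ii) first, by exploiting the step-by-step construction of $b(x)$ and $a(x)$ from Theorem~\ref{t11}, and then to obtain parts (iii) and (iv) by a symmetric argument based on the lazy algorithm together with $\gamma^j$ and the reversed inequalities $\succ$, $\succeq$, with $\Lambda$ playing the role of $\lambda$; no new ideas arise in the passage from greedy/quasi-greedy to lazy/quasi-lazy.

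For part (i), I proceed in three steps. First I establish strict monotonicity of $b$: writing $x_0:=x$ and $x_n:=q_{b_n}(x_{n-1}-d_{b_n}/q_{b_n})$, the greedy digit $b_n$ is the largest $j$ with $x_{n-1}\ge(d_j+\lambda)/q_j$, and this quantity is non-decreasing in $x_{n-1}$; combined with the injectivity of $b$ (since $\pi(b(x))=x$), $x<x'$ forces a first index at which the digits differ and hence $b(x)\prec b(x')$. Second, the forward direction: if $b_n=j<M$ then the greedy step could not pick $j+1$, so $x_{n-1}<(d_{j+1}+\lambda)/q_{j+1}$, which via the recursion becomes $x_n<\pi(\alpha^j)$; since $(b_{n+i})_{i\ge 1}$ is the greedy expansion of $x_n$ and $\alpha^j=a(\pi(\alpha^j))$ is quasi-greedy for $\pi(\alpha^j)$, a lex-to-$\pi$ comparison yields $(b_{n+i})\prec\alpha^j$. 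Third, the converse: given $(j_i)$ satisfying \eqref{18}, set $x:=\pi((j_i))$ and suppose for contradiction that $(j_i)\prec b(x)$ at a first position $n$, so $j_n<b_n(x)\le M$ and in particular $j_n<M$; then \eqref{18} forces $(j_{n+i})\prec\alpha^{j_n}$, and converting this to a $\pi$-inequality produces $\pi(j_nj_{n+1}\cdots)<(d_{j_n+1}+\lambda)/q_{j_n+1}\le\pi(b_n(x)b_{n+1}(x)\cdots)$, contradicting the equality forced by $\pi((j_i))=\pi(b(x))$.

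Part (ii) follows the same three steps with $a$, $\preceq$, and the infiniteness condition in place of $b$, $\prec$, and no infiniteness; the quasi-greedy algorithm picks at each step the largest digit such that the remainder admits an infinite expansion starting with that digit, which directly yields the non-strict form of \eqref{18}.

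The main technical obstacle, used in both the forward and converse directions above, is the conversion between lexicographic order and the numerical order of $\pi$-values: in general the regularity condition \eqref{15} permits the intervals $[(d_j+\lambda)/q_j,(d_j+\Lambda)/q_j]$ to overlap, so lex order does not imply numerical order for arbitrary sequences. I plan to handle this by a short preliminary lemma showing that the sequences $\alpha^j$ (and, for parts (iii), (iv), $\gamma^j$) are precisely the thresholds at which the two orders align: using the identity $\pi(\alpha^j)=q_j(d_{j+1}+\lambda)/q_{j+1}-d_j$ (immediate from the definition and \eqref{11}) together with Theorem~\ref{t11}, one checks that $\sigma\prec\alpha^j$ implies $\pi(\sigma)\le\pi(\alpha^j)$, with strict inequality when $\sigma$ satisfies \eqref{18}, and conversely that $\pi(\sigma)<\pi(\alpha^j)$ for a greedy sequence $\sigma$ implies $\sigma\prec\alpha^j$. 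Once this lemma is in hand, the three-step pattern closes up for each of the four assertions.
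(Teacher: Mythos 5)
Your overall architecture coincides with the paper's: first characterize the greedy/quasi-greedy maps arithmetically via the step-by-step algorithm (this is Proposition \ref{p25} in the paper), then convert the arithmetic inequalities into the lexicographic conditions involving $\alpha^{j}$, and finally obtain (iii)--(iv) by reflection. The forward direction and the reduction of the converse to a single comparison at the first index of disagreement are both sound. However, the entire weight of the argument rests on your ``short preliminary lemma'', and that lemma is both misstated and unproved. The unconditional claim ``$\sigma\prec\alpha^j$ implies $\pi(\sigma)\le\pi(\alpha^j)$'' is false precisely because of the overlap permitted by \eqref{15}. Concretely, take $S=\set{(0,q_0),(1,q_1)}$ with $q_0=1.2$, $q_1=2$ (regular by Lemma \ref{l41}): then $\alpha^0$ begins with the digit $1$, so $\sigma=01^{\infty}\prec\alpha^0$, yet $\pi(01^{\infty})=\frac{1}{q_0(q_1-1)}=\frac{5}{6}>\frac{q_0}{q_1}=\frac{3}{5}=\pi(\alpha^0)$. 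The correct statement requires $\sigma$ to satisfy the recursive condition \eqref{18} (or its non-strict analogue) at \emph{every} position, not merely at the leading one, and this is exactly what makes the implication non-trivial.

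Moreover, even the corrected statement is not something ``one checks'' in a line: its proof is the technical core of the whole theorem. In the paper it is carried out in Proposition \ref{p33} by constructing the indices $k_0<k_1<k_2<\cdots$ at which the shifted sequence first drops strictly below the relevant $\alpha^{j_{k_p}}$, and then evaluating $\pi(j_{n+1}j_{n+2}\cdots)$ through the telescoping computation \eqref{39}; the multiple-base setting forces the delicate bookkeeping of the products $q_{j_{k_0+1}}\cdots q_{j_{k_p}}$ and a separate treatment of the case where the construction terminates (for the quasi-greedy version) and of finite tails (for the greedy version, where a tail $0^{\infty}$ must be replaced by a quasi-greedy tail before comparing with $a(y)$). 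None of this appears in your proposal, so as written it establishes the easy direction and the reduction scheme but leaves the essential lexicographic-to-arithmetic implication unjustified. To complete the proof you would need to supply (a restatement and) a proof of the threshold lemma in its conditional form, which in substance reproduces the paper's Proposition \ref{p33}.
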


Since an expansion is unique if and only if it is greedy and lazy at the same time, Theorem  \ref{t12} implies the following corollary:

\begin{corollary}\label{c13}\mbox{}
Let $S$ be  a regular alphabet--base system.
A expansion $(j_i)$ of some number $x\in[\lambda, \Lambda]$  is unique if and only if the conditions \eqref{18} and \eqref{110}  are satisfied.
\end{corollary}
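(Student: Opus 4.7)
The plan is to combine parts (i) and (iii) of Theorem \ref{t12} with the observation, already highlighted in the paragraph preceding the corollary, that an expansion of $x$ is unique precisely when it coincides with both the greedy and the lazy expansion of $x$. The key fact underpinning this equivalence is that, by Theorem \ref{t11}(ii), the greedy expansion $b(x)$ is the lexicographically largest expansion of $x$ and the lazy expansion $l(x)$ is the lexicographically smallest one; hence $x$ has a unique expansion if and only if $b(x)=l(x)$, and in that case the unique expansion equals $b(x)=l(x)$.

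First I would fix an expansion $(j_i)$ of some $x\in[\lambda,\Lambda]$ and assume that it is the unique expansion of $x$. Then both $b(x)$ and $l(x)$ must coincide with $(j_i)$, so $(j_i)=b(x)$ satisfies \eqref{18} by Theorem \ref{t12}(i), and $(j_i)=l(x)$ satisfies \eqref{110} by Theorem \ref{t12}(iii).

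Conversely, I would assume that $(j_i)$ is an expansion of $x$ satisfying both \eqref{18} and \eqref{110}. Applying Theorem \ref{t12}(i), the sequence $(j_i)$ belongs to the image of the greedy map, and since that map is a bijection from $[\lambda,\Lambda]$ onto its image of such sequences, the only $y\in[\lambda,\Lambda]$ with $b(y)=(j_i)$ is $y=\pi((j_i))=x$; hence $(j_i)=b(x)$. The same argument with Theorem \ref{t12}(iii) gives $(j_i)=l(x)$. Therefore $b(x)=l(x)$, and since any expansion of $x$ lies lexicographically between $l(x)$ and $b(x)$, there is no other expansion of $x$; the expansion $(j_i)$ is unique.

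There is essentially no obstacle beyond correctly invoking the bijectivity in Theorem \ref{t12}: the only point that requires a line of care is observing that a sequence satisfying \eqref{18} equals $b(y)$ for $y=\pi((j_i))$, rather than for some unrelated $y$, which is immediate from the fact that $b$ is a right inverse of $\pi$ on $[\lambda,\Lambda]$.
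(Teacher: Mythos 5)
Your proposal is correct and follows exactly the route the paper takes: the paper derives Corollary \ref{c13} from Theorem \ref{t12} via the one-line observation that an expansion is unique if and only if it is simultaneously greedy and lazy. You merely make explicit the details (that $b(x)$ and $l(x)$ are the lexicographic extremes among expansions of $x$, and that the bijectivity in Theorem \ref{t12} forces a sequence satisfying \eqref{18} to equal $b(\pi((j_i)))$), which the paper leaves implicit.
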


Using Corollary \ref{c13} we may estimate the number of unique expansions.
Given a regular alphabet-base system $S$, we denote by $\uu_S$ the set of numbers $x\in[\lambda, \Lambda]$ having a unique expansion, and by $\uu_S'$ the set of corresponding expansions.
The set $\uu_S$ always contains $\lambda$ and $\Lambda$ by Lemma \ref{l24} below; the corresponding elements $0^{\infty}$ and $M^{\infty}$ of  $\uu_S'$ are called \emph{trivial unique expansions}.

Henceforth we frequently use the \emph{reflection}  $\overline{(j_i)}=(\overline{j_i})$ of a given sequence $(j_i)$, defined by the formula $\overline{j_i}:=M-j_i$.

\begin{example}\label{e14}
Let us consider the classical system \eqref{16qqq}.
We have $\lambda=0$, $\Lambda=\frac{M}{q-1}$,
\begin{equation*}
q_j\left(\frac{d_{j+1}+\lambda}{q_{j+1}}-\frac{d_j}{q_j}\right)=1\qtq{for all}j<M,
\end{equation*}
and
\begin{equation*}
q_{j}\left(\frac{d_{j-1}+\Lambda}{q_{j-1}}-\frac{d_{j}}{q_{j}}\right)=\frac{M}{q-1}-1 \qtq{for all} j>0.
\end{equation*}
Therefore, denoting by $\alpha_{M,q}$ and $\gamma_{M,q}$  the quasi-greedy expansion of $1$ and  the quasi-lazy expansion of $\frac{M}{q-1}-1$ in base $q$, respectively, we infer from Corollary \ref{c13} that
\begin{equation*}
\uu_{S_{M,q}}'=\set{(j_i):(j_{n+i})<\alpha_{M,q}\text{ whenever }j_n<M,\text{ and } (j_{n+i})>\gamma_{M,q}\text{ whenever }j_n>0}.
\end{equation*}
Furthermore, $\gamma_{M,q}=\overline{\alpha_{M,q}}$ by a well-known variant of Lemma \ref{l23} below.
\end{example}

Baker \cite{B2014} has proved that $\uu'_{S_{M, q}}$ contains only the two trivial unique expansions if and only if $q\le q_{GR}$, where
\begin{equation*}
q_{GR}=
\begin{cases}
(k+\sqrt{k^2+4k})/2&\text{if $M=2k-1$,}\\
k+1&\text{if $M=2k$.}
\end{cases}
\end{equation*}
The notation is motivated by the special case $M=1$ where $q_{GR}$ is equal to the \emph{Golden Ratio}; see Erd\H os et al. \cite{EHJ1991}.
The quasi-greedy expansion of $1$ in base $q_{GR}$ is given by the following formula:
\begin{equation}\label{112}
\alpha_{GR}:=
\begin{cases}
(k(k-1))^\infty &\text{if $M=2k-1$,}\\
k^{\infty}&\text{if $M=2k$.}
\end{cases}
\end{equation}

There exists another constant  $q_{KL}$ such that $\uu'_{S_{M, q}}$ is countable if and only if $q<q_{KL}$.
It was proved by Glendinning and Sidorov \cite{GS2001} for $M=1$, and then in \cite{DK2009} for all $M\ge 1$ that $q_{KL}$ is the \emph{Komornik--Loreti constant},  \cite{KL1998,KL2002}, i.e., the quasi-greedy expansion of $1$ in base $q_{KL}$ is given by the formula
\begin{equation}\label{113}
\alpha_{KL}:=
\begin{cases}
(k-1+\tau_i) &\text{if $M=2k-1$,}\\
(k+\tau_i-\tau_{i-1})&\text{if $M=2k$,}
\end{cases}
\end{equation}
where $(\tau_i)=1101\ 0011\ \cdots$ denotes the truncated Thue--Morse sequence.

Using the sequences $\alpha_{GR}$ and $\alpha_{KL}$ we may generalize Example \ref{e14}:

\begin{theorem}\label{t15}\mbox{}
Let $S$ be  a regular alphabet--base system, and consider the sequences \eqref{16}, \eqref{17}, \eqref{112} and \eqref{113}.
\begin{enumerate}[\upshape (i)]
\item If $\alpha^j\preceq \alpha_{GR}$ and $\gamma^{M-j}\succeq\overline{\alpha_{GR}}$ for all $j<M$, then $\uu_S'=\set{0^{\infty},M^{\infty}}$;
\item If $\alpha^j\succ\alpha_{GR}$ and $\gamma^{M-j}\prec\overline{\alpha_{GR}}$ for all $j<M$, then $\uu'_{S}$ is an infinite set;
\item If $\alpha^j\prec \alpha_{KL}$ and $\gamma^{M-j}\succ\overline{\alpha_{KL}}$ for all $j<M$, then $\uu_{S}'$ is countable;
\item If $\alpha^j\succeq\alpha_{KL}$ and $\gamma^{M-j}\preceq\overline{\alpha_{KL}}$ for all $j<M$, then $\uu'_{S}$ has the power of the continuum.
\end{enumerate}
\end{theorem}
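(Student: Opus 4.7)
The plan is to reduce each of the four statements to the corresponding classical result for the systems $S_{M,q}$ by combining the unique-expansion characterization in Corollary~\ref{c13} with monotonicity in the lexicographic order. A preliminary observation is that the conditions $\gamma^{M-j}\succeq\overline{\alpha_{GR}}$ for $j<M$ can be rewritten as $\gamma^j\succeq\overline{\alpha_{GR}}$ for $j>0$ (and likewise for $\alpha_{KL}$), so the hypotheses are symmetric constraints on the finite families $(\alpha^j)_{j<M}$ and $(\gamma^j)_{j>0}$.

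Parts (i) and (iv) should follow directly. For (i), if $(j_i)\in\uu_S'$, then Corollary~\ref{c13} combined with $\alpha^{j_n}\preceq\alpha_{GR}$ yields $(j_{n+i})\prec\alpha_{GR}$ whenever $j_n<M$, and $\gamma^{j_n}\succeq\overline{\alpha_{GR}}$ yields $(j_{n+i})\succ\overline{\alpha_{GR}}$ whenever $j_n>0$; these are precisely the unique-expansion conditions for the classical system at $q_{GR}$, so Baker's theorem \cite{B2014} forces $(j_i)\in\set{0^{\infty},M^{\infty}}$. Part (iv) is the dual inclusion: every $(j_i)\in\uu_{S_{M,q_{KL}}}'$ satisfies $\alpha_{KL}\preceq\alpha^{j_n}$ and $\overline{\alpha_{KL}}\succeq\gamma^{j_n}$ by hypothesis, hence lies in $\uu_S'$, and the continuum cardinality of the classical set (Glendinning--Sidorov \cite{GS2001}; \cite{DK2009}) transfers.

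Parts (ii) and (iii) require an auxiliary classical base strictly between the relevant threshold and the finite family. For (iii), set $\beta:=\max_{j<M}\alpha^j\prec\alpha_{KL}$ and $\delta:=\min_{j>0}\gamma^j\succ\overline{\alpha_{KL}}$; using that $q\mapsto\alpha_{M,q}$ is left-continuous in the product topology (a consequence of Theorem~\ref{t12}(ii) applied to classical systems) together with the reflection identity $\gamma_{M,q}=\overline{\alpha_{M,q}}$ noted in Example~\ref{e14}, one picks $q<q_{KL}$ close enough to $q_{KL}$ so that $\alpha_{M,q}\succeq\beta$ and $\overline{\alpha_{M,q}}\preceq\delta$ hold simultaneously. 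Corollary~\ref{c13} then yields $\uu_S'\subseteq\uu_{S_{M,q}}'$, which is countable by the classical theorem. Part (ii) is symmetric: choose $q>q_{GR}$ near $q_{GR}$ with $\alpha_{M,q}\preceq\min_{j<M}\alpha^j$ and $\overline{\alpha_{M,q}}\succeq\max_{j>0}\gamma^j$, so $\uu_{S_{M,q}}'\subseteq\uu_S'$ and the infiniteness from Baker is transferred.

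The main obstacle I anticipate is this simultaneous approximation step in (ii)--(iii): given finitely many admissible sequences strictly on one side of $\alpha_{KL}$ or $\alpha_{GR}$, one must produce a single base $q$ for which $\alpha_{M,q}$ is sandwiched between them and the endpoint sequence, and arrange that this same $q$ controls the $\gamma$-side. The left-continuity of $q\mapsto\alpha_{M,q}$ in the product topology, combined with the classical reflection symmetry, should suffice; establishing this left-continuity cleanly (ultimately from the Parry characterization for the classical systems) is where the technical work will concentrate.
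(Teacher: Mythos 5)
Your parts (i), (iii) and (iv) are correct and essentially coincide with the paper's argument: (i) and (iv) are the inclusions $\uu'_S\subseteq\uu'_{S_{M,q_{GR}}}$ and $\uu'_S\supseteq\uu'_{S_{M,q_{KL}}}$ obtained from Corollary \ref{c13} and Example \ref{e14}, and your sandwiching step in (iii) is exactly the paper's, which invokes the component-wise convergence $\alpha_{M,q}\to\alpha_{KL}$ as $q\to q_{KL}$ (the paper simply cites \cite[Proposition 2.6]{DK2010} for this instead of re-deriving the continuity from the Parry characterization).

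Part (ii), however, contains a genuine gap. You propose to pick $q>q_{GR}$ with $\alpha_{M,q}\preceq\min_{j<M}\alpha^j$ and $\overline{\alpha_{M,q}}\succeq\max_{j>0}\gamma^j$, and then transfer the infiniteness of $\uu'_{S_{M,q}}$. But the map $q\mapsto\alpha_{M,q}$ is \emph{not} right-continuous at $q_{GR}$; there is a lexicographic jump there. For $M=2k-1$ one has $\alpha_{GR}=(k(k-1))^\infty$, while for every $q>q_{GR}$ the sequence $\alpha_{M,q}$ begins with $kk$; for $M=2k$ one has $\alpha_{GR}=k^\infty$, while $\alpha_{M,q}$ begins with $k+1$ for every $q>q_{GR}$. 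Consequently, if for instance $\alpha^0=(k(k-1))^m k\cdots$ with $m$ large --- which is perfectly compatible with the hypothesis $\alpha^0\succ\alpha_{GR}$ --- then $\alpha^0\prec\alpha_{M,q}$ for \emph{every} $q>q_{GR}$, and no admissible auxiliary base exists; taking $q=q_{GR}$ itself only gives the useless inclusion $\set{0^\infty,M^\infty}=\uu'_{S_{M,q_{GR}}}\subseteq\uu'_S$. The paper circumvents this by a direct construction: under the hypotheses of (ii), the sequences $0^m\alpha_{GR}$, $m\in\NN$, all satisfy \eqref{18} and \eqref{110}, because every shift of $\alpha_{GR}$ is $\preceq\alpha_{GR}\prec\alpha^{j}$ (for each $j<M$) and $\succeq\overline{\alpha_{GR}}\succ\gamma^{j}$ (for each $j>0$); hence they form an infinite subset of $\uu'_S$. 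Some such explicit family is needed --- the approximation-by-a-classical-base strategy cannot work on the $q_{GR}$ side.
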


For regular systems having two elements we have more precise results:

\begin{theorem}\label{t16}
Let $S=\set{(d_0, q_0), (d_1, q_1)}$ be a regular alphabet-base system  with $q_0, q_1\in(1,2]$.
Then the following three properties are equivalent:
\begin{enumerate}[\upshape (i)]
\item $\uu_S'$ is an infinite set;
\item $\alpha^0\succ (10)^\infty$ and $\gamma^1\prec (01)^\infty$;
\item $q_0>1+\frac{1}{q_1}$ and $q_1>1+\frac{1}{q_0}$.
\end{enumerate}
Furthermore, if $\uu_S'$ is not infinite, then $\uu_S'=\set{0^{\infty},M^{\infty}}$.
\end{theorem}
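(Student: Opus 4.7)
The plan is to prove the theorem along the chain (iii)$\Rightarrow$(ii)$\Rightarrow$(i), combined with the single implication $\neg$(ii)$\Rightarrow\uu_S'=\{0^\infty,1^\infty\}$, which simultaneously yields the converse (i)$\Rightarrow$(ii) and the ``Furthermore'' assertion.

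For (ii)$\Leftrightarrow$(iii), the idea is a direct computation of $\alpha^0$ and $\gamma^1$. Observe that $\pi((10)^\infty)=(q_0d_1+d_0)/(q_0q_1-1)$, and that the Parry-type characterization in Theorem~\ref{t12}(ii) tells us $(10)^\infty$ is itself a valid quasi-greedy expansion exactly when $\alpha^0\succeq(10)^\infty$, in which case $a(\pi((10)^\infty))=(10)^\infty$. Since the map $a$ is strictly increasing, the lexicographic comparison $\alpha^0\succ(10)^\infty$ translates to the numerical inequality $q_0((d_1+\lambda)/q_1-d_0/q_0)>\pi((10)^\infty)$. Substituting $\lambda\in\{d_0/(q_0-1),\,d_1/(q_1-1)\}$ via \eqref{11}, \eqref{12} and clearing denominators reduces this to $q_0>1+1/q_1$. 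The symmetric computation gives $\gamma^1\prec(01)^\infty\Leftrightarrow q_1>1+1/q_0$.

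For (ii)$\Rightarrow$(i), apply Theorem~\ref{t15}(ii) with $M=1$, so that $k=1$ in \eqref{112} and hence $\alpha_{GR}=(10)^\infty$, $\overline{\alpha_{GR}}=(01)^\infty$. The only index $j<M$ is $j=0$, so (ii) is literally the hypothesis of Theorem~\ref{t15}(ii).

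The technical core is $\neg$(ii)$\Rightarrow\uu_S'=\{0^\infty,1^\infty\}$. Suppose $(j_i)\in\uu_S'\setminus\{0^\infty,1^\infty\}$. By Corollary~\ref{c13} it satisfies \eqref{18} and \eqref{110}. First, rule out sequences eventually equal to $1^\infty$: at the last $0$-position \eqref{18} would force $1^\infty\prec\alpha^0$, contradicting $\alpha^0\preceq 1^\infty$; dually, sequences eventually $0^\infty$ violate \eqref{110} since $\gamma^1\succeq 0^\infty$. So $(j_i)$ contains both digits infinitely often. In the sub-case $\alpha^0\preceq(10)^\infty$, condition \eqref{18} at any $0$-position forbids the subword $011$ (the tail starting $11\cdots$ exceeds $(10)^\infty$), forcing $(j_i)=1^{k_0}\,0^{a_1}1\,0^{a_2}1\,0^{a_3}1\cdots$ with $k_0\ge 0$ and $a_i\ge 1$; the sub-case $\gamma^1\succeq(01)^\infty$ dually forbids $100$ and yields the reflected block form. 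The plan is then to imitate the classical Erd\H os--Horv\'ath--Jo\'o argument at any transition $j_n=0$, $j_{n+1}=1$: from \eqref{18} at $n$ we get $(j_{n+2},\ldots)\prec(01)^\infty$, while from \eqref{110} at $n+1$ we get $(j_{n+2},\ldots)\succ\gamma^1$, and iterating these nested constraints — together with the algebraic translation $q_0q_1\le q_1+1$ (respectively $q_0q_1\le q_0+1$) from Step~1 — produces a contradiction, either by forcing the tail to stabilize to $(10)^\infty$ (violating \eqref{18} strictly) or by exhibiting an explicit competing expansion via the overlap of the digit-$0$ and digit-$1$ value intervals that the algebraic inequality guarantees.

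The main obstacle is precisely this last step, and in particular the ``mixed'' regimes in which only one of $\alpha^0\preceq(10)^\infty$, $\gamma^1\succeq(01)^\infty$ holds: here one of the two tail constraints is weak, and one must extract the contradiction from the \emph{algebraic} consequence of the active inequality rather than directly from its sequence-level twin. The restriction $q_0,q_1\in(1,2]$ enters essentially at this point, since it is exactly the regime in which the two value intervals $[(d_0+\lambda)/q_0,(d_0+\Lambda)/q_0]$ and $[(d_1+\lambda)/q_1,(d_1+\Lambda)/q_1]$ have enough overlap to support the switch needed to construct the competing expansion for every shift of a non-trivial sequence.
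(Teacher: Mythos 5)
Your skeleton matches the paper's: (ii)$\Leftrightarrow$(iii) by translating the lexicographic comparisons into the numerical inequalities $\pi(\alpha^0)\le\pi(110^\infty)$ and $\pi(\gamma^1)\ge\pi(001^\infty)$ and computing (the paper's Lemma \ref{l42} (i), (iii)); (ii)$\Rightarrow$(i) by exhibiting the sequences $0^k(10)^\infty$ in $\uu_S'$; and $\neg$(ii)$\Rightarrow\uu_S'=\set{0^{\infty},1^{\infty}}$, which gives both the converse and the ``Furthermore'' clause. Up to that point the proposal is sound (your monotonicity argument for (ii)$\Leftrightarrow$(iii) needs the small extra observation that when $(10)^\infty$ is not quasi-greedy one still gets $a(\pi((10)^\infty))\succeq(10)^\infty\succ\alpha^0$, but this is routine).

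The genuine gap is exactly where you flag ``the main obstacle'': the case where only one of the two inequalities in (ii) fails. Suppose $\alpha^0\preceq(10)^\infty$. As you note, \eqref{18} then forbids the block $011$, so a nontrivial $(c_i)\in\uu_S'$ must eventually have the form $0^{a_1}10^{a_2}10^{a_3}1\cdots$. But to derive a contradiction you also need an upper bound on the $a_i$ (otherwise nothing forces the tail toward $(10)^\infty$), and \eqref{110} only gives $(c_{n+i})\succ\gamma^1$ at $1$-positions --- which is useless unless you know $\gamma^1$ is not too small. The paper's resolution is Lemma \ref{l42} (ii): the \emph{cross-implication} $\alpha^0\preceq(10)^\infty\Rightarrow\gamma^1\succeq 0(01)^\infty$, proved by computing $\pi(\gamma^1)-\pi(0(01)^\infty)$ and using $q_1\le 2$ precisely at the step $\tfrac1{q_1}+\tfrac1{q_1-1}-q_0\ge\tfrac1{q_1}+1-q_0\ge 0$. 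Only with this two-sided lexicographic squeeze does the purely combinatorial Lemma \ref{l43} apply (no $11$, at most one $00$, hence a tail equal to $(10)^\infty$, contradicting \eqref{18} strictly). Your proposal gestures at ``the algebraic consequence of the active inequality'' and ``the overlap of the value intervals,'' which is the right intuition for where $q_0,q_1\in(1,2]$ enters, but no such estimate is actually produced; without the quantitative implication from $q_0\le 1+1/q_1$ to a lower bound of the form $\gamma^1\succeq 0(01)^\infty$ (and its reflection for the case $\gamma^1\succeq(01)^\infty$), the contradiction you describe cannot be completed.
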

Theorem \ref{t16} applies to every system $S=\set{(0, q_0), (1, q_1)}$ with $q_0, q_1\in(1,2]$; see Lemma \ref{l41} below.

For special alphabets there is a variant of property (iii) of Theorem \ref{t16}:

\begin{theorem}\label{t17}
Consider an alphabet-base system $S=\set{(0, q_0), (1, q_1)}$ with $d_1\ge 0\ge d_0$ and $q_0, q_1\in(1,2]$.
\begin{enumerate}[\upshape (i)]
\item If $q_1\ge q_0$, then
\begin{equation*}
\uu_S'\text{ is  infinite }
\Longleftrightarrow
q_0>1+\frac{1}{q_1}.
\end{equation*}
\item If $q_0\ge q_1$, then
\begin{equation*}
\uu_S'\text{ is infinite }
\Longleftrightarrow
q_1>1+\frac{1}{q_0}.
\end{equation*}
\end{enumerate}
\end{theorem}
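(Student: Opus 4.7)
My plan is to deduce Theorem~\ref{t17} from Theorem~\ref{t16} via a short regularity check followed by an elementary algebraic observation. Parts (i) and (ii) are interchanged by the symmetry $(d_0,q_0)\leftrightarrow (d_1,q_1)$, so it suffices to prove part~(i); throughout I assume $q_1\ge q_0$.

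The first step is to verify that $S$ is regular, so that Theorem~\ref{t16} may be invoked. Since $d_0\le 0\le d_1$, one has $\pi(0^\infty)\le 0\le \pi(1^\infty)$, hence
\[
\lambda=\frac{d_0}{q_0-1}\qquad\text{and}\qquad \Lambda=\frac{d_1}{q_1-1}.
\]
A short calculation shows that \eqref{13} and \eqref{14} both reduce to $\lambda<\Lambda$, which holds as long as the system is non-degenerate (i.e., $d_0,d_1$ are not both zero). Clearing denominators in \eqref{15} yields
\[
\bigl((q_0-1)(q_1-1)-1\bigr)\bigl(d_0(q_1-1)-d_1(q_0-1)\bigr)\ge 0,
\]
which is a product of two non-positive factors: the first because $q_0,q_1\in(1,2]$, the second because $d_0\le 0\le d_1$. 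Thus $S$ is regular, and Theorem~\ref{t16} gives that $\uu_S'$ is infinite if and only if $q_0>1+\tfrac{1}{q_1}$ and $q_1>1+\tfrac{1}{q_0}$.

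The final step is the observation that, under $q_1\ge q_0$, the conjunction of these two inequalities is equivalent to the single inequality $q_0>1+1/q_1$. One direction is trivial. For the other, rewrite $q_0>1+1/q_1$ as $q_0q_1>q_1+1$; since $q_1+1\ge q_0+1$, this yields $q_0q_1>q_0+1$, that is, $q_1>1+1/q_0$. This completes part~(i), and part~(ii) follows by the symmetric argument with the roles of $q_0$ and $q_1$ exchanged.

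I anticipate no serious obstacle here: the only step requiring care is the sign tracking in the regularity verification, where both factors of the displayed product must be checked to be non-positive under the stated hypotheses. Once that bookkeeping is done, the algebraic reduction is essentially a one-line monotonicity argument based on the fact that $1/x$ is decreasing.
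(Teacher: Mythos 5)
Your proof is correct, and it takes a genuinely different route from the paper at the decisive step. The paper first checks regularity via Lemma~\ref{l41}, and then, to pass from $q_0>1+\frac1{q_1}$ to the two-sided condition of Theorem~\ref{t16}, it invokes Lemma~\ref{l44}: under $d_1\ge 0\ge d_0$ and $q_0\le q_1$ one has $\gamma^1\preceq\overline{\alpha^0}$, proved by a term-by-term comparison of two series; this converts $\alpha^0\succ(10)^\infty$ into $\gamma^1\prec(01)^\infty$ and lets one apply condition (ii) of Theorem~\ref{t16}. You instead apply condition (iii) of Theorem~\ref{t16} and observe that under $q_1\ge q_0$ the conjunction $q_0>1+\frac1{q_1}$ and $q_1>1+\frac1{q_0}$ collapses to the single inequality $q_0>1+\frac1{q_1}$, since $q_0q_1>q_1+1\ge q_0+1$. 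This one-line monotonicity argument bypasses Lemma~\ref{l44} entirely, and it makes visible that the sign hypothesis $d_1\ge 0\ge d_0$ enters only through the regularity check (your clearing-of-denominators identity for \eqref{15} is exactly the second inequality of Lemma~\ref{l41} in factored form, and is verified correctly; note also that for the system $\set{(0,q_0),(1,q_1)}$ literally stated in the theorem, Lemma~\ref{l41} already gives regularity, so your only genuine caveat --- the degenerate case $d_0=d_1=0$ --- does not arise). What the paper's longer route buys is the finer lexicographic statement $\gamma^1\preceq\overline{\alpha^0}$ of Lemma~\ref{l44}, which is of independent interest but is not needed for Theorem~\ref{t17} itself.
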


Theorem \ref{t17} improves a former theorem of  Neunhäuserer \cite{Neunhauserer2019}: he assumed that $d_0=0$, $d_1=1$, $q_1\ge q_0$, and he did not investigate the case where
\begin{equation*}
1+\frac{q_0}{q^2_1}
\le  q_0
\le 1+\frac{1}{q_1}.
\end{equation*}

The paper is arranged as follows.
Theorem \ref{t11} is proved in Section \ref{s2}, Theorems \ref{t12} and \ref{t15} are proved  in Section \ref{s3}.
Finally, the proofs of Theorems \ref{t16} and \ref{t17} are given in Section \ref{s4}.

\section{Proof of Theorem \ref{t11}}\label{s2}

First we establish some results on  general alphabet-base systems
\begin{equation*}
S:=\set{(d_0,q_0),\ldots,(d_M,q_M)}
\qtq{with}
\lambda:=\min_j\pi(j^\infty)
\qtq{and}
\Lambda:=\max_j\pi(j^\infty).
\end{equation*}
We recall that $\set{0,\ldots,M}^{\NN}$ is a compact metrizable space for the Tychonoff product topology, and that the convergence in this space is the component-wise convergence.

\begin{lemma}\label{l21}\mbox{}
\begin{enumerate}[\upshape (i)]
\item The series
\begin{equation*}
\pi((j_i))=\sum_{i=1}^{\infty}\frac{d_{j_i}}{q_{j_1}\cdots q_{j_i}}
\end{equation*}
converges for every sequence $(j_i)$, and the function $\pi:\set{0,\ldots,M}^{\NN}\to\RR$ is continuous.
\item The set
\begin{equation*}
J:=\set{\pi((j_i))\ :\ (j_i)\in\set{0,\ldots,M}^{\NN}}
\end{equation*}
is compact,  and
\begin{equation}\label{22}
\lambda=\min J\quad
\Lambda=\max J.
\end{equation}
\end{enumerate}
\end{lemma}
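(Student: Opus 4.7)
The plan is to prove (i) by a standard comparison-with-a-geometric-series estimate, and then obtain (ii) from continuity plus the self-similarity structure $J=\bigcup_j T_j(J)$, where $T_j(x)=(d_j+x)/q_j$ is the ``digit map.''

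For part (i), set $D:=\max_j\abs{d_j}$ and $q_*:=\min_j q_j>1$. Each term of the series is bounded in absolute value by $D/q_*^i$, so the series converges absolutely, uniformly in the sequence. To prove continuity of $\pi$, I would show that if two sequences $(j_i)$ and $(k_i)$ agree on the first $N$ coordinates, then their first $N$ partial sums coincide, while the two tails are each bounded by $\sum_{i>N}D/q_*^i$, which tends to $0$ as $N\to\infty$. Since a basic neighbourhood in the Tychonoff topology is exactly ``agreement on the first $N$ coordinates,'' this yields continuity.

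For part (ii), compactness of $J$ is immediate: $\set{0,\ldots,M}^{\NN}$ is compact by Tychonoff, $\pi$ is continuous by (i), so $J=\pi(\set{0,\ldots,M}^{\NN})$ is compact, and in particular the minimum and maximum of $J$ are attained. To identify $\min J=\lambda$, I would exploit the identity
\begin{equation*}
\pi(j_1j_2\cdots)=\frac{d_{j_1}+\pi(j_2j_3\cdots)}{q_{j_1}}=T_{j_1}\bigl(\pi(j_2j_3\cdots)\bigr),
\end{equation*}
which, by surjectivity of the shift on $\set{0,\ldots,M}^{\NN}$, gives the self-similarity relation $J=\bigcup_{j=0}^MT_j(J)$. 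Each $T_j$ is strictly increasing, so $\min J=\min_j T_j(\min J)$. Picking $j^*$ achieving this minimum yields $\min J=T_{j^*}(\min J)=(d_{j^*}+\min J)/q_{j^*}$, hence $\min J=d_{j^*}/(q_{j^*}-1)=\pi((j^*)^\infty)\ge\lambda$. The opposite inequality $\min J\le\lambda$ is trivial, as $\pi(j^\infty)\in J$ for every $j$. The argument for $\Lambda=\max J$ is identical, replacing ``min'' by ``max.''

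No genuine obstacle is expected: the only point requiring a small argument is the extremality claim in (ii), and the self-similarity identity $J=\bigcup_jT_j(J)$ together with monotonicity of the $T_j$ reduces it to the elementary calculation above. Everything else is routine.
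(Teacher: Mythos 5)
Your part (i) is essentially the paper's own argument: the same bound $\abs{d_{j_i}/(q_{j_1}\cdots q_{j_i})}\le D/q_*^i$ gives convergence, and agreement on the first $N$ coordinates controls the difference of the two values by a tail of a geometric series, which is exactly how the paper proves continuity. For part (ii) the compactness step is identical, but your identification of $\min J$ and $\max J$ goes by a genuinely different route. The paper proves the two-sided bound $\lambda\le\pi((j_i))\le\Lambda$ directly for every sequence: it first reduces to sequences ending in $0^{\infty}$ by continuity, then inducts on the last index with $j_i\ne 0$, using at each step that $\lambda\le d_{j_1}/(q_{j_1}-1)$ forces $(d_{j_1}+\lambda)/q_{j_1}\ge\lambda$ (and dually for $\Lambda$). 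You instead use the self-similarity $J=\bigcup_j T_j(J)$ coming from the shift, deduce $\min J=\min_j T_j(\min J)$ from monotonicity of the digit maps, and conclude that $\min J$ is the fixed point $d_{j^*}/(q_{j^*}-1)=\pi((j^*)^{\infty})$ of the minimizing branch, hence lies in $\set{\pi(j^\infty):j}$ and equals $\lambda$. Both arguments are correct; yours is the standard IFS-attractor argument and is arguably cleaner, at the price of relying on compactness of $J$ (so the extremum is attained) before the extremality claim can be made, whereas the paper's induction establishes the pointwise bounds without invoking attainment, and its key inequality is really the same fixed-point observation applied to a single value rather than to the set $J$. No gap in either.
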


\begin{proof}
(i) Setting
\begin{equation*}
d:=\max\set{\abs{d_0},\ldots,\abs{d_M}}
\qtq{and}
q:=\min\set{q_0,\ldots,q_M}
\end{equation*}
we have $q>1$, and therefore
\begin{equation}\label{23}
\sum_{i=n+1}^{\infty}\abs{\frac{d_{j_i}}{q_{j_1}\cdots q_{j_i}}}
\le\frac{d}{q^n(q-1)}
\to 0\qtq{as}n\to\infty.
\end{equation}
The continuity of $\pi$ follows by observing that if $(j_i), (j_i')\in\set{0,\ldots,M}^{\NN}$ satisfy $j_i=j_i'$ for $i=1,\ldots, n$, then
\begin{equation*}
\abs{\pi((j_i))-\pi((j_i'))}
\le\sum_{i=n+1}^{\infty}\abs{\frac{d_{j_i}-d_{j_i'}}{q_{j_1}\cdots q_{j_i}}}\le\frac{2d}{q^n(q-1)},
\end{equation*}
and the right hand side tends to zero as $n\to\infty$.
\medskip

(ii) $J$ is compact because it is the continuous image of a compact space by (i).
Furthermore, it contains $\lambda$ and $\Lambda$ because $\pi(j^{\infty})\in J$ for all $j$.

It remains to prove that
\begin{equation}\label{24}
\lambda\le\pi((j_i))\le\Lambda
\end{equation}
for every sequence $(j_i)$.
Since
\begin{equation*}
\pi((j_i))=\lim_{n\to\infty}\pi(j_1\cdots j_n0^{\infty})
\end{equation*}
by \eqref{23}, it suffices to prove \eqref{24} for sequences ending with $0^{\infty}$.
Proceeding by induction on $n\ge 1$, we prove  \eqref{24} for all sequences satisfying $j_i=0$ for all $i\ge n$.
The case $n=1$ follows from the definition of $\lambda$ and $\Lambda$:
\begin{equation*}
\lambda=\min_j\pi(j^\infty)
\le \pi(0^\infty)
\le\max_j\pi(j^\infty)=\Lambda.
\end{equation*}

Assume that the property holds for some $n\ge 1$, and consider a sequence satisfying $j_i=0$ for all $i\ge n+1$.
Using the induction hypothesis hence we get
\begin{align*}
&\pi(j_1j_2\cdots)
=\frac{d_{j_1}+\pi(j_2j_3\cdots)}{q_{j_1}}
\ge \frac{d_{j_1}+\lambda }{q_{j_1}}
\intertext{and}
&\pi(j_1j_2\cdots)
=\frac{d_{j_1}+\pi(j_2j_3\cdots)}{q_{j_1}}
\le \frac{d_{j_1}+\Lambda }{q_{j_1}}.
\end{align*}
We conclude by observing that
\begin{align*}
&\lambda\le\pi(j_1^{\infty})=\frac{d_{j_1}}{q_{j_1}-1}\Longrightarrow \frac{d_{j_1}+\lambda }{q_{j_1}}\ge\lambda
\intertext{and}
&\Lambda\ge\pi(j_1^{\infty})=\frac{d_{j_1}}{q_{j_1}-1}\Longrightarrow \frac{d_{j_1}+\Lambda }{q_{j_1}}\le\Lambda.\qedhere
\end{align*}
\end{proof}

We say that a sequence $(j_i)\in\set{0,\ldots,M}^{\NN}$ is  a \emph{subexpansion} or a \emph{superexpansion} of $x$ if
\begin{equation*}\label{25}
\pi((j_i))\le x\qtq{or}
\pi((j_i))\ge x,
\end{equation*}
respectively.

\begin{lemma}\label{l22}\mbox{}
\begin{enumerate}[\upshape (i)]
\item Every $x\ge\lambda $ has a lexicographically largest subexpansion $b(x)$.
Furthermore, the map $x\mapsto b(x)$ is  non-decreasing on $[\lambda ,\infty)$, and $b(x)=M^{\infty}$ for every $x\ge\Lambda$.
\item Every $x\ge\lambda $ has a lexicographically largest infinite subexpansion $a(x)$.
Furthermore, the map $x\mapsto a(x)$ is  non-decreasing on $[\lambda ,\infty)$, and $a(x)=M^{\infty}$ for every $x\ge\Lambda$.
\item Every $x\le\Lambda $ has a lexicographically smallest superexpansion $l(x)$.
Furthermore, the map $x\mapsto l(x)$ is  non-decreasing on $(-\infty,\Lambda]$, and $l(x)=0^{\infty}$ for every $x\le\lambda$.
\item Every $x\le\Lambda $ has a lexicographically smallest co-infinite superexpansion $m(x)$.
Furthermore, the map $x\mapsto m(x)$ is  non-decreasing on $(-\infty,\Lambda]$, and $m(x)=0^{\infty}$ for every $x\le\lambda$.
\end{enumerate}
\end{lemma}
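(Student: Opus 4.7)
The plan is to build $b(x), a(x), l(x), m(x)$ by compactness arguments based on Lemma \ref{l21}. Introduce the closed sets $\Sigma_{\le}(x):=\pi^{-1}((-\infty,x])$ and $\Sigma_{\ge}(x):=\pi^{-1}([x,\infty))$, which are nonempty for $x\ge\lambda$ and $x\le\Lambda$ respectively by \eqref{22}. For parts (i) and (iii) I would invoke the standard fact that every nonempty closed subset of $\set{0,\ldots,M}^{\NN}$ has a lexicographically largest and a lexicographically smallest element: one picks them coordinate-by-coordinate, at step $n$ choosing the largest (resp.\ smallest) digit extending the prefix already selected for which the set still contains a sequence, and using compactness to conclude that the resulting sequence lies in the set. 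Applied to $\Sigma_{\le}(x)$ and $\Sigma_{\ge}(x)$ this produces $b(x)$ and $l(x)$. Monotonicity is immediate from the inclusions $\Sigma_{\le}(x)\subseteq\Sigma_{\le}(y)$ for $x\le y$ (and the dual for $\Sigma_{\ge}$), and the boundary statements follow since $M^{\infty}$, resp.\ $0^{\infty}$, is the overall lex-extremum of $\set{0,\ldots,M}^{\NN}$ and belongs to $\Sigma_{\le}(x)$ for $x\ge\Lambda$, resp.\ to $\Sigma_{\ge}(x)$ for $x\le\lambda$.

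For (ii) and (iv) the natural candidate sets---infinite subexpansions and co-infinite superexpansions---are no longer closed, so the previous argument requires an extra step. Concentrating on (ii), let $T(x)$ denote the set of infinite subexpansions of $x$ and construct $a(x)$ by the same coordinate-by-coordinate maximisation, now restricted to $T(x)$. The set $T(x)$ is nonempty for $x\ge\lambda$ because every constant sequence $(j^*)^{\infty}$ realising $\lambda$ is infinite in the sense of the paper. By construction $a(x)$ is the lexicographic supremum of $T(x)$, and since each prefix of $a(x)$ is extended by some element of $T(x)\subseteq\Sigma_{\le}(x)$, the closedness of $\Sigma_{\le}(x)$ yields $\pi(a(x))\le x$.

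The step I expect to take the most care is verifying that $a(x)$ itself lies in $T(x)$, i.e.\ is infinite. Suppose for contradiction that $a(x)=j_1\cdots j_n0^{\infty}$ with $j_n>0$. By construction, for every $L\ge 1$ there exists $(k_i)\in T(x)$ agreeing with $a(x)$ on the first $n+L$ digits, namely $j_1\cdots j_n 0^L$. Since $(k_i)$ is infinite it must contain a positive digit at some position $>n+L$, at which $a(x)$ has a $0$; this forces $(k_i)\succ a(x)$, contradicting the fact that $a(x)$ is the lexicographic supremum of $T(x)$. Hence $a(x)\in T(x)$ and is its lex-largest element. Monotonicity of $a$ follows from $T(x)\subseteq T(y)$ for $x\le y$, and the boundary statement uses that $M^{\infty}$ is infinite and lies in $T(x)$ for $x\ge\Lambda$. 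The proof of (iv) is completely symmetric: replace $T(x)$ by the set of co-infinite superexpansions of $x$, take the lexicographic infimum, and run the same hypothetical-ending argument with the roles of $0$ and $M$ exchanged.
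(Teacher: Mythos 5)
Your proposal is correct and follows essentially the same route as the paper: a coordinate-by-coordinate greedy maximisation over the set of (infinite) subexpansions, with the continuity of $\pi$ from Lemma \ref{l21} guaranteeing that the limiting sequence is still a subexpansion, monotonicity coming from the inclusion of the candidate sets, and symmetry handling (iii)--(iv). The one place you go beyond the paper is in (ii)/(iv): the paper merely says to ``repeat the proof of (i) by considering infinite subexpansions everywhere,'' whereas you explicitly verify that the constructed sequence cannot be of the form $j_1\cdots j_n0^{\infty}$ with $j_n>0$ --- a small gap in the published argument that your contradiction (an infinite subexpansion extending the prefix $j_1\cdots j_n0^{L}$ must eventually carry a positive digit and hence be lexicographically larger) correctly closes.
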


\begin{proof}
(i) Write $\lambda=\pi(j^{\infty})$, and
fix an arbitrary $x\ge\lambda $.
Then $j^{\infty}$ is a subexpansion of $x$, so that the subexpansions of $x$ for a non-empty set $\ff(x)$.
Let $j_1$ be the largest index such that  at least one of these sequences starts with $j_1$, and let $\ff_1(x)$ be the non-empty family of sequences in $\ff(x)$ that start with $j_1$.
Proceeding by induction, if $j_1,\ldots, j_n$ and $\ff_n(x)$ have  already been defined for some $n\ge 1$, then let $j_{n+1}$ be the largest index such that  at least one of the sequences in $\ff_{n}(x)$ starts with $j_1\cdots j_{n+1}$, and let $\ff_{n+1}(x)$ be the non-empty family of sequences in $\ff_n(x)$ that start with $j_1\cdots j_{n+1}$.
We have $\pi((j_i))\le x$ by the continuity of the map $\pi$, so that $(j_i)$ itself belongs to $\ff(x)$, and then it is the lexicographically maximal element of $\ff(x)$ by construction.

The non-decreasingness of the map $x\mapsto b(x)$ follows from the observation that $b(x)$ is a subexpansion of every $y>x$, and hence $b(x)\le b(y)$.

If $x\ge\Lambda$, then every sequence $(j_i)$ is a subexpansion of $x$.
Hence $b(x)=M^{\infty}$, because
 $M^{\infty}$ is the lexicographically largest sequence.
\medskip

(ii) Since $j^{\infty}$ is an \emph{infinite} sequence, we may repeat  the proof of (i) by considering  infinite subexpansions everywhere.
\medskip

(iii) and (iv) We may repeat the proofs of (i) and (ii) by writing  $\Lambda=\pi(j^{\infty})$, changing the word subexpansion to superexpansion, and by changing the sense of the inequalities.
\end{proof}

We may also deduce parts (iii) and (iv) of Lemma \ref{l22} from its parts (i) and (ii) by using a \emph{reflection principle}.
Let us associate with the \emph{primary system}
\begin{equation*}
S:=\set{(d_0,q_0),\ldots,(d_M,q_M)}
\end{equation*}
the \emph{dual system}
\begin{equation*}
S':=\set{(d_0',q_0'),\ldots,(d_M',q_M')}
\end{equation*}
with
\begin{equation}\label{26}
d_j':=-d_{M-j},\quad
q_j':=q_{M-j},\quad j=0,\ldots,M,
\end{equation}
We define $\pi'$, $J'$, $\lambda'$ and $\Lambda'$ accordingly:
\begin{align*}
&\pi'((j_i))
=\pi'(j_1j_2\cdots)
:=\sum_{i=1}^{\infty}\frac{d_{j_i}'}{q_{j_1}'\cdots q_{j_i}'};\\
&J':=\set{\pi'((j_i))\ :\ (j_i)\in\set{0,\ldots,M}^{\NN}};\\
&\pi'(j_1\cdots j_m)
:=\sum_{i=1}^m\frac{d_{j_i}'}{q_{j_1}'\cdots q_{j_i}'};\\
&\lambda':=\min_j\frac{d_j'}{q_j'-1}
\qtq{and}
\Lambda':=\max_j\frac{d_j'}{q_j'-1},
\end{align*}
and we denote by $b'(x)$, $a'(x)$, $l'(x)$ and $m'(x)$ the greedy and quasi-greedy subexpansions and the lazy and quasi-lazy superexpansions of $x$ with respect to the dual system.

\begin{lemma}\label{l23}\mbox{}
\begin{enumerate}[\upshape (i)]
\item The reflection $(j_i)\mapsto\overline{(j_i)}$ is a decreasing bijection of the space of sequences onto itself, and $\pi((j_i))=-\pi'\left(\overline{(j_i)}\right)$ for every sequence.
\item The following equalities hold:
\begin{equation}\label{27}
J'=-J,\quad \lambda'=-\Lambda
\qtq{and}\Lambda'=-\lambda.
\end{equation}
\item $(j_i)$ is a subexpansion (respectively superexpansion, expansion) of $x$ in $S$ if and only if $\overline{(j_i)}$ is a superexpansion (respectively subexpansion, expansion) of $-x$ in $S'$.
\item $S''=S$.
\end{enumerate}
\end{lemma}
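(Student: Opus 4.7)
The plan is to reduce every part of the lemma to a single computational identity derived directly from the definition \eqref{26} of the dual system, namely that the involution $j \mapsto M-j$ converts the dual data into a sign-reversal of the primary data. Once this is established in part (i), the remaining parts are formal consequences.

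For part (i), I would first observe that $j \mapsto M-j$ is the unique order-reversing bijection of $\{0,\ldots,M\}$ onto itself; applied componentwise, this makes $(j_i)\mapsto \overline{(j_i)}$ an involution of $\set{0,\ldots,M}^\NN$ that reverses the lexicographic order (the first index at which two sequences disagree is the same after reflection, and the inequality of the two entries is reversed). The identity $\pi((j_i))=-\pi'(\overline{(j_i)})$ is then a one-line calculation: by \eqref{26} we have $d'_{\overline{j_i}}=d'_{M-j_i}=-d_{j_i}$ and $q'_{\overline{j_i}}=q'_{M-j_i}=q_{j_i}$, so every term of the series defining $\pi'(\overline{(j_i)})$ is the negative of the corresponding term of $\pi((j_i))$, and Lemma \ref{l21}(i) legitimates comparing the series termwise.

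For part (ii), the equality $J'=-J$ follows immediately from (i) together with the bijectivity of the reflection. For the endpoints, the key is the special case $\pi'(j^\infty)=-\pi((M-j)^\infty)$ of (i); taking the minimum over $j$ and reindexing by $k=M-j$ turns $\min_j\pi'(j^\infty)$ into $-\max_k\pi(k^\infty)$, which gives $\lambda'=-\Lambda$, and symmetrically $\Lambda'=-\lambda$. Part (iii) is then purely formal: by (i), $\pi((j_i))\le x$ is equivalent to $\pi'(\overline{(j_i)})\ge -x$, which is exactly the definition of $\overline{(j_i)}$ being a superexpansion of $-x$ in $S'$; the superexpansion and expansion cases follow by reversing the inequality or combining both. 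Part (iv) is a second application of \eqref{26}: $d''_j=-d'_{M-j}=-(-d_{M-(M-j)})=d_j$ and $q''_j=q'_{M-j}=q_{M-(M-j)}=q_j$, so the dual of the dual is the original system.

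I do not expect a genuine obstacle in this proof; the entire lemma is bookkeeping around the involution $j\mapsto M-j$. The only step that requires care is keeping track of the double reflection inside the subscripts of $d'$ and $q'$ in part (i) — writing everything through the substitution $k=M-j$ keeps the algebra transparent and avoids any risk of a sign or index error propagating into parts (ii)–(iv).
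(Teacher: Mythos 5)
Your proposal is correct and follows essentially the same route as the paper: part (i) by the termwise computation using \eqref{26}, with (ii)--(iv) obtained as formal consequences of (i) and the definitions. The paper's own proof is just a more terse version of the same bookkeeping.
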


\begin{proof}
(i) The first assertion is elementary. The identity is obtained by a straighforward computation:
\begin{equation*}
\pi((j_i))=\sum_{i=1}^\infty\frac{d_{j_i}}{q_{j_1}\cdots q_{j_i}}=\sum_{i=1}^\infty\frac{-d'_{M-j_i}}{q'_{M-j_1}\cdots q'_{M-j_i}}=-\pi'\left(\overline{(j_i)}\right).
\end{equation*}
Furthermore, (ii) and (iii) follow from (i), (iv) follows from the definition, and (v), (vi) follow from the relations \eqref{26} and \eqref{27}.
\end{proof}

Lemmas \ref{l21}--\ref{l23} hold for all alphabet-base systems.
Now we make a further hypothesis.
We call the system $S$ \emph{semi-regular} if it satisfies the first two conditions \eqref{13} and \eqref{14} of the regularity.

\begin{lemma}\label{l24}
If $S$ is semi-regular, then
the following relations hold:
\begin{align}
&\frac{d_0+x}{q_0}
<\cdots<
\frac{d_M+x}{q_M}
\qtq{for all}x\in[\lambda,\Lambda];\label{28}\\
&\frac{d_0}{q_0-1}<\frac{d_1}{q_1-1}
<\cdots<\frac{d_M}{q_M-1}\label{29}\\
&\lambda=\pi(0^{\infty})=\frac{d_0}{q_0-1}
\qtq{and}
\Lambda=\pi(M^{\infty})=\frac{d_M}{q_M-1};\label{210}\\
&\text{if}\quad 0^{\infty}<(j_i)<M^{\infty},\qtq{then}\lambda< \pi((j_i))<\Lambda.\label{211}
\end{align}
Furthermore, $S'$ is also semi-regular.
\end{lemma}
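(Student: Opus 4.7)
The plan is to establish the four displayed relations in succession and then deduce the semi-regularity of $S'$ via the reflection principle of Lemma~\ref{l23}.

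First I would prove \eqref{28} by observing that, for each fixed $j<M$, the map
\[
x\ \longmapsto\ \frac{d_{j+1}+x}{q_{j+1}}-\frac{d_j+x}{q_j}
\]
is an affine function of $x$ which, by the semi-regularity hypotheses \eqref{13} and \eqref{14}, is strictly positive at $x=\lambda$ and at $x=\Lambda$; any affine function attains its minimum on an interval at an endpoint, so it remains strictly positive throughout $[\lambda,\Lambda]$.

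Next I would deduce \eqref{29} by specializing \eqref{28} to $x=\pi(j^\infty)=d_j/(q_j-1)$, which lies in $[\lambda,\Lambda]$ by the very definition of $\lambda,\Lambda$. At that value the left-hand side collapses to $\pi(j^\infty)$ after one line of algebra, so the resulting inequality reads $\pi(j^\infty)<\pi((j+1)^\infty)$; chaining across $j=0,\ldots,M-1$ yields \eqref{29}. Then \eqref{210} is immediate, since strict monotonicity identifies $\lambda=\min_j\pi(j^\infty)$ with $\pi(0^\infty)$ and $\Lambda=\max_j\pi(j^\infty)$ with $\pi(M^\infty)$.

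For \eqref{211} I would argue by induction on the position $n$ of the first nonzero digit of $(j_i)$. By \eqref{210} one has $(d_0+\lambda)/q_0=\lambda$, so substituting $x=\lambda$ in \eqref{28} gives $(d_j+\lambda)/q_j>\lambda$ for every $j>0$; combined with the recursive bound $\pi((j_i))\ge(d_{j_1}+\lambda)/q_{j_1}$ obtained in the proof of Lemma~\ref{l21}(ii), this settles the base case $n=1$. For $n>1$ one has $j_1=0$, and the identity $\pi((j_i))=(d_0+\pi((j_{i+1})))/q_0$ reduces the claim to the shifted sequence, whose first nonzero digit occurs at position $n-1$; the strict monotonicity of $x\mapsto(d_0+x)/q_0$ propagates the strict inequality from the induction hypothesis. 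The upper bound $\pi((j_i))<\Lambda$ is entirely symmetric (or obtained by applying Lemma~\ref{l23}).

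Finally, for the semi-regularity of $S'$, I would substitute $d_j'=-d_{M-j}$, $q_j'=q_{M-j}$, $\lambda'=-\Lambda$ and $\Lambda'=-\lambda$ into the conditions for $S'$: the one at $\lambda'$ becomes the assertion that $j\mapsto-(d_{M-j}+\Lambda)/q_{M-j}$ is strictly increasing, which is exactly \eqref{14} for $S$ read backwards, and the one at $\Lambda'$ reduces analogously to \eqref{13}. The only step requiring genuine work is the induction for \eqref{211}; the remaining items are direct affine manipulations and substitutions.
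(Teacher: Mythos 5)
Your proposal is correct and follows essentially the same route as the paper: the affine/endpoint argument for \eqref{28} is just the paper's ``convex combinations of \eqref{13} and \eqref{14}'' in different words, the specialization $x=d_j/(q_j-1)$ for \eqref{29} and \eqref{210} is identical, your induction on the position of the first nonzero (resp.\ non-$M$) digit for \eqref{211} merely unrolls the paper's direct computation with $\pi(0^{n-1})$, and the substitution check for the semi-regularity of $S'$ is what the paper leaves implicit. No gaps.
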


\begin{proof}
Taking convex linear combinations of the  inequalities \eqref{13} and \eqref{14}, we get \eqref{28}.
Next, setting $x=\frac{d_j}{q_j-1}$ for any fixed $j<M$, we have
\begin{equation*}
x=\frac{d_j+x}{q_j}<\frac{d_{j+1}+x}{q_{j+1}}
\end{equation*}
by \eqref{28}, and hence
\begin{equation*}
x<\frac{d_{j+1}}{q_{j+1}-1}.
\end{equation*}
This proves \eqref{29}.
The equalities \eqref{210} follow from \eqref{11}, \eqref{12} and \eqref{29}.

To prove \eqref{211}, we consider a sequence $0^{\infty}<(j_i)<M^{\infty}$.
Denoting by $n$ the smallest index satisfying $j_n\ne 0$, we have $j_n>0$, and we infer from \eqref{29} and the equality $\lambda=\pi(0^{\infty})$ that (for  $n=1$ we use the conventions $\pi(0^{n-1})=0$ and $q_0^{n-1}=1$)
\begin{align*}
\pi((j_i))
&=\pi(0^{n-1})+\frac{d_{j_n}+\pi(d_{j_{n+1}}d_{j_{n+2}}\cdots)}{q_0^{n-1}q_{j_n}}
\ge \pi(0^{n-1})+\frac{d_{j_n}+\lambda}{q_0^{n-1}q_{j_n}}\\
&>\pi(0^{n-1})+\frac{d_0+\lambda}{q_0^n}
=\pi(0^{n-1})+\frac{\lambda}{q_0^{n-1}}
=\pi(0^{\infty})
=\lambda.
\end{align*}
Similarly, denoting by $m$  the smallest index satisfying $j_m\ne M$, we have $j_m<M$, and we  infer from \eqref{29} and the equality $\Lambda=\pi(M^{\infty})$ that
\begin{align*}
\pi((j_i))
&=\pi(M^{m-1})+\frac{d_{j_m}+\pi(d_{j_{m+1}}d_{j_{m+2}}\cdots)}{q_M^{m-1}q_{j_m}}
\le \pi(M^{m-1})+\frac{d_{j_m}+\Lambda}{q_M^{m-1}q_{j_m}}\\
&<\pi(M^{m-1})+\frac{d_M+\Lambda}{q_M^m}
=\pi(M^{m-1})+\frac{\Lambda}{q_M^{m-1}}
=\pi(M^{\infty})
=\Lambda.
\end{align*}
Finally, the semi-regularity of $S'$ follows from the relations \eqref{26} and \eqref{27}.
\end{proof}

In the following proposition we show that the classical greedy, quasi-greedy, lazy and quasi-lazy \emph{expansions} may be extended to all \emph{regular} alphabet--base systems.

\begin{proposition}\label{p25}
Let $S$ be a regular alphabet--base system.
Then $S'$ is also regular, and the greedy subexpansions, the quasi-greedy subexpansions, the lazy superexpansions, the quasi-lazy superexpansions are in fact \emph{expansions} on $[\lambda ,\Lambda ]$.
More precisely:
\color{black}
\begin{enumerate}[\upshape (i)]
\item The greedy subexpansions have the following properties:
\begin{enumerate}[\upshape (a)]
\item $\pi(b(x))=x$ for all $x\in[\lambda ,\Lambda ]$.
\item The map $x\mapsto b(x)$ is strictly increasing on $[\lambda ,\Lambda ]$.
\item A sequence $(j_i)$ is greedy if and only if
\begin{equation}\label{212}
\pi(j_{n+1}j_{n+2}\cdots)
<q_{j_n}\left(\frac{d_{j_n+1}+\lambda }{q_{j_n+1}}-\frac{d_{j_n}}{q_{j_n}}\right)\qtq{whenever}j_n<M.
\end{equation}
\end{enumerate}

\item The quasi-greedy subexpansions have the following properties:
\begin{enumerate}[\upshape (a)]
\item $\pi(a(x))=x$ for all $x\in[\lambda ,\Lambda ]$.
\item The map $x\mapsto a(x)$ is strictly increasing on $[\lambda ,\Lambda ]$.
\item An infinite sequence $(j_i)$ is quasi-greedy if and only if
\begin{equation}\label{213}
\pi(j_{n+1}j_{n+2}\cdots)
\le q_{j_n}\left(\frac{d_{j_n+1}+\lambda }{q_{j_n+1}}-\frac{d_{j_n}}{q_{j_n}}\right)\qtq{whenever}j_n<M.
\end{equation}
\end{enumerate}

\item The lazy superexpansions have the following properties:
\begin{enumerate}[\upshape (a)]
\item $\pi(l(x))=x$ for all $x\in[\lambda ,\Lambda ]$.
\item The map $x\mapsto l(x)$ is strictly increasing on $[\lambda ,\Lambda ]$.
\item A sequence $(j_i)$ is lazy if and only if
\begin{equation*}\label{214}
\pi(j_{n+1}j_{n+2}\cdots)
>q_{j_n}\left(\frac{d_{j_n-1}+\Lambda }{q_{j_n-1}}-\frac{d_{j_n}}{q_{j_n}}\right)\qtq{whenever}j_n>0.
\end{equation*}
\end{enumerate}

\item The quasi-lazy superexpansions have the following properties:
\begin{enumerate}[\upshape (a)]
\item $\pi(m(x))=x$ for all $x\in[\lambda ,\Lambda ]$.
\item The map $x\mapsto m(x)$ is strictly increasing on $[\lambda ,\Lambda ]$.
\item A co-infinite sequence $(j_i)$ is quasi-lazy if and only if
\begin{equation*}\label{215}
\pi(j_{n+1}j_{n+2}\cdots)
\ge q_{j_n}\left(\frac{d_{j_n-1}+\Lambda }{q_{j_n-1}}-\frac{d_{j_n}}{q_{j_n}}\right)\qtq{whenever}j_n>0.
\end{equation*}
\end{enumerate}
\end{enumerate}
\end{proposition}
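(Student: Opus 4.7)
The plan is to prove parts (i)--(iv) in order, deducing (iii) and (iv) from (i) and (ii) applied to the dual system $S'$ via the reflection principle. I would first confirm the side claim that $S'$ is regular: semi-regularity is given by Lemma~\ref{l24}, so only condition~\eqref{15} has to be verified for $S'$, and substituting $d'_j=-d_{M-j}$, $q'_j=q_{M-j}$, $\lambda'=-\Lambda$, $\Lambda'=-\lambda$ from Lemma~\ref{l23} turns \eqref{15} for $S'$ into \eqref{15} for $S$ after the index change $k=M-j-1$.

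For part (i) I would prove (c) first and then use it to obtain (a) and (b). For the ``$\Leftarrow$'' direction of (c), assume \eqref{212} and set $x=\pi((j_i))$. Given any $(k_i)\succ (j_i)$, take the first differing index $n$; then $j_n<M$, and using $\pi(k_{n+1}k_{n+2}\cdots)\ge\lambda$ with~\eqref{28} I get $\pi(k_n k_{n+1}\cdots)\ge (d_{j_n+1}+\lambda)/q_{j_n+1}$, whereas \eqref{212} rewrites as $\pi(j_n j_{n+1}\cdots)<(d_{j_n+1}+\lambda)/q_{j_n+1}$, forcing $\pi((k_i))>x$, so $(j_i)=b(x)$. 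For ``$\Rightarrow$'', if $(j_i)=b(y)$ then $(j_i)=b(x)$ as well with $x=\pi((j_i))$ (a strictly larger subexpansion of $x$ would still lie $\preceq y$), and \eqref{212} is obtained by testing the prohibited lex-larger candidate $j_1\cdots j_{n-1}(j_n+1)0^\infty$ for each $n$ with $j_n<M$. For (a) I would run the greedy algorithm on $x\in[\lambda,\Lambda]$: starting from $x_0=x$, inductively set $j_n:=\max\set{j:(d_j+\lambda)/q_j\le x_{n-1}}$ and $x_n:=q_{j_n}x_{n-1}-d_{j_n}$. Regularity~\eqref{15}, which causes the intervals $[(d_j+\lambda)/q_j,(d_j+\Lambda)/q_j]$ to cover $[\lambda,\Lambda]$, keeps $x_n\in[\lambda,\Lambda]$; the choice of $j_n$ yields~\eqref{212} at each step; and telescoping with the tail bound~\eqref{23} gives $\pi((j_i))=x$. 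Part (c) then identifies the constructed sequence with $b(x)$, proving (a). Part (b) follows at once: Lemma~\ref{l22} gives $b(x)\preceq b(y)$ for $x<y$, and equality would force $x=\pi(b(x))=\pi(b(y))=y$.

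Part (ii) is handled analogously on infinite sequences, with the greedy construction modified so as not to terminate in a tail of zeros. A clean route is to set $a(x):=\lim_{y\nearrow x}b(y)$ for $x\in(\lambda,\Lambda]$ and $a(\lambda):=0^\infty$, to verify by continuity of $\pi$ that $\pi(a(x))=x$ and that $a(x)$ is infinite, and then to repeat the argument of (i)(c) with strict inequalities relaxed to non-strict ones. Parts (iii) and (iv) are then immediate from (i) and (ii) applied to $S'$: Lemma~\ref{l23} identifies $l(x)$ in $S$ with $\overline{b'(-x)}$ in $S'$ and $m(x)$ with $\overline{a'(-x)}$, and since reflection reverses the lex order, the characterizations transport under $j\leftrightarrow M-j$. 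I expect the main obstacle to be the quasi-greedy case (ii): showing that the left-limit construction actually produces an \emph{infinite} sequence that is lex-maximal among infinite subexpansions and satisfies the non-strict \eqref{213}, while correctly handling the boundary $x=\lambda$ and the values $y$ for which $b(y)$ is already infinite, takes more care than the strict case. Once (i) and (ii) are in place, (iii) and (iv) come at no extra cost from the reflection dictionary.
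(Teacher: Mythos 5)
Your proposal is correct, but it reaches the key statements (a) by a different mechanism than the paper. The paper never runs a digit-by-digit greedy algorithm: it takes $b(x)$ as the lexicographically largest subexpansion already produced by Lemma~\ref{l22}, and proves $\pi(b(x))=x$ directly by passing to the limit in $\pi(j_1\cdots j_{n-1}(j_n+1)0^{\infty})>x$, with condition~\eqref{15} used to exclude the possibility of a \emph{last} index $j_n<M$; for $a(x)$ it argues on the largest infinite subexpansion itself, using~\eqref{15} to force the two inequalities in~\eqref{217} to be equalities. You instead build an expansion of $x$ by the transformation $x_n=q_{j_n}x_{n-1}-d_{j_n}$ with $j_n$ maximal subject to $(d_{j_n}+\lambda)/q_{j_n}\le x_{n-1}$, here \eqref{15} enters as the statement that the intervals $[(d_j+\lambda)/q_j,(d_j+\Lambda)/q_j]$ overlap and cover $[\lambda,\Lambda]$ (together with $(d_0+\lambda)/q_0=\lambda$ and $(d_M+\Lambda)/q_M=\Lambda$ from~\eqref{210}), and you then identify the output with $b(x)$ through the characterization (c), whose two directions you prove exactly as the paper does. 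Your route buys a constructive algorithm and makes the role of~\eqref{15} geometrically transparent; the paper's route avoids having to prove that the algorithmic sequence is lex-maximal. For (ii) your left-limit definition $a(x)=\lim_{y\nearrow x}b(y)$ is a genuine departure and is the one place where your argument is only sketched: you still owe the verification that this limit exists componentwise (monotonicity of $b$ gives it), that it cannot end in $0^{\infty}$ (otherwise $\pi(b(y))$ would be constant near $x$, contradicting (ia)), and that it is lex-maximal among \emph{infinite} subexpansions, which you must extract from the non-strict version of (c) using~\eqref{211} to guarantee $\pi(k_{n+1}k_{n+2}\cdots)>\lambda$ for infinite competitors; all of this does go through, and you correctly flag it as the delicate point. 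Parts (b), (iii) and (iv) coincide with the paper's treatment.
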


\begin{proof}
The regularity of $S'$ follows from the relations \eqref{26} and \eqref{27}.
\medskip

(ia) Since $b(x)$ is a subexpansion of $x$, we have $\pi(b(x))\le x$ for every $x\ge\lambda$.
It remains to show that $\pi(b(x))\ge x$ for every $x\in [\lambda ,\Lambda ]$.
Since $b(x)=(j_i)$ is the largest subexpansion of $x$, we have
\begin{equation}\label{216}
\pi(j_1\cdots j_{n-1}(j_n+1)0^{\infty})>x
\end{equation}
whenever $j_n<M$.
If there are infinitely many elements $j_n<M$, then letting $n\to\infty$ in this inequality and using the continuity of $\pi$ (Lemma \ref{l21} (i)) we conclude that $\pi((j_i))\ge x$.

Next we show that we cannot have a last index $j_n<M$.
Indeed, in that case  we would have \eqref{216} again, and
\begin{equation*}
\pi(j_1\cdots j_{n-1}j_nM^{\infty})
=\pi(b(x))\le x.
\end{equation*}
Hence we would infer the inequality
\begin{equation*}
\pi(j_nM^{\infty})
<\pi((j_n+1)0^{\infty}).
\end{equation*}
By \eqref{210} this is equivalent to
\begin{equation*}
\frac{d_{j_n}+\Lambda}{q_{j_n}}
<\frac{d_{j_n+1}+\lambda}{q_{j_n+1}},
\end{equation*}
and contradicts \eqref{15}.

Finally, if $b(x)=M^{\infty}$, then $\pi(M^{\infty})=\Lambda \ge x$ by the choice of $x$.
\medskip

(ib) If $\lambda\le x<y\le\Lambda$, then $b(x)\le b(y)$ by Lemma \ref{l22} (i), and $b(x)\ne b(y)$ because $\pi(b(x))=x<y=\pi(b(y))$ by (ia).
\medskip

(ic) Since $b(x)=b(\Lambda)$ for all $x>\Lambda$, it suffices to consider the sequences $b(x)$ for $x\in[\lambda,\Lambda]$.
Then $\pi(b(x))=x$ by (ia), and a sequence
 $(j_i)$ is greedy if and only if
\begin{equation*}
\pi(j_1\cdots j_{n-1}mk_{n+1}k_{n+2}\cdots)>\pi((j_i))
\end{equation*}
for all sequences $(k_{n+j})$,
whenever $j_n<m\le M$.
Equivalently,
\begin{equation*}
\frac{d_m+\pi(k_{n+1}k_{n+2}\cdots)}{q_m}
>\frac{d_{j_n}+\pi(j_{n+1}j_{n+2}\cdots)}{q_{j_n}}
\end{equation*}
for all sequences $(k_{n+j})$,
whenever $j_n<m\le M$, and this is equivalent to
\begin{equation*}
\frac{d_m+\lambda}{q_m}
>\frac{d_{j_n}+\pi(j_{n+1}j_{n+2}\cdots)}{q_{j_n}}
\qtq{whenever}j_n<m\le M
\end{equation*}
by \eqref{22}.
In view of \eqref{13} this is equivalent to
\begin{equation*}
\frac{d_{j_n+1}+\lambda}{q_{j_n+1}}
>\frac{d_{j_n}+\pi(j_{n+1}j_{n+2}\cdots)}{q_{j_n}}\qtq{whenever}j_n<M,
\end{equation*}
i.e., to \eqref{212}.
\medskip

(iia) As in (ia), it is sufficient to show that $\pi(a(x))\ge x$ for every $x\in [\lambda ,\Lambda ]$.
This is true if $a(x)=M^{\infty}$ because $\pi(M^{\infty})=\Lambda \ge x$ by \eqref{210}.

Since $a(x)=(j_i)$ is the largest infinite subexpansion of $x$, we have
\begin{equation*}
\pi(j_1\cdots j_{n-1}(j_n+1)M^{\infty})>x
\end{equation*}
whenever $j_n<M$.
If there are infinitely many elements $j_n<M$, then letting $n\to\infty$ in this inequality and using the continuity of $\pi$ we conclude that $\pi((j_i))\ge x$.

If $(j_i)$ has a last element $j_n<M$, then we have
\begin{equation*}
\pi(a(x))=\pi(j_1\cdots j_{n-1}j_nM^{\infty})
\le x<\pi(j_1\cdots j_{n-1}(j_n+1)0^kM^{\infty})
\end{equation*}
for all $k\ge 1$ because $a(x)$ is the largest infinite subexpansion of $x$.
Letting $k\to\infty$ this yields
\begin{equation}\label{217}
\pi(a(x))=\pi(j_1\cdots j_{n-1}j_nM^{\infty})
\le
x\le\pi(j_1\cdots j_{n-1}(j_n+1)0^{\infty}).
\end{equation}
We conclude the proof by observing that the  two inequalities in \eqref{217} are in fact equalities.
Indeed, we have
\begin{equation*}
\frac{d_{j_n+1}+\lambda}{q_{j_n+1}}
\le
\frac{d_{j_n}+\Lambda}{q_{j_n}}
\end{equation*}
by our assumption \eqref{15}, and this is equivalent to
\begin{equation*}
\pi(j_1\cdots j_{n-1}(j_n+1)0^{\infty})\le \pi(j_1\cdots j_{n-1}j_nM^{\infty}).
\end{equation*}
\medskip

(iib) If $\lambda\le x<y\le\Lambda$, then $a(x)\le a(y)$ by Lemma \ref{l22} (ii), and $a(x)\ne a(y)$ because $\pi(a(x))=x<y=\pi(a(y))$ by (iia).
\medskip

(iic) Since $a(x)=a(\Lambda)$ for all $x>\Lambda$, it suffices to consider the sequences $a(x)$ for $x\in[\lambda,\Lambda]$.
Then $\pi(a(x))=x$ by (iia), and an infinite sequence $(j_i)$ is quasi-greedy if and only if
\begin{equation*}
\pi(j_1\cdots j_{n-1}mk_{n+1}k_{n+2}\cdots)>\pi((j_i))
\end{equation*}
for all  sequences $(k_{n+j})$ with infinitely many non-zero digits,whenever $j_n<m\le M$.
(We had to be careful here in order to exclude the infinite sequence $0^{\infty}$.)
Equivalently,
\begin{equation*}
\frac{d_m+\pi(k_{n+1}k_{n+2}\cdots)}{q_m}
>\frac{d_{j_n}+\pi(j_{n+1}j_{n+2}\cdots)}{q_{j_n}}
\end{equation*}
for all  sequences $(k_{n+j})$ with infinitely many non-zero digits,
whenever $j_n<m\le M$, and this is equivalent to
\begin{equation*}
\frac{d_m+\lambda}{q_m}
\ge\frac{d_{j_n}+\pi(j_{n+1}j_{n+2}\cdots)}{q_{j_n}}
\qtq{whenever}j_n<m\le M
\end{equation*}
by \eqref{22}.
In view of \eqref{13} this is equivalent to
\begin{equation*}
\frac{d_{j_n+1}+\lambda}{q_{j_n+1}}
\ge\frac{d_{j_n}+\pi(j_{n+1}j_{n+2}\cdots)}{q_{j_n}}\qtq{whenever}j_n<M,
\end{equation*}
i.e., to \eqref{213}.
\medskip

Since $S'$ is regular, the statements (iii) and (iv) follow from (i) and (ii) by applying Lemma \ref{l23} (iii).
\end{proof}

\begin{proof}[Proof of Theorem \ref{t11}]
Part (i) follows from \eqref{22}.
Part (ii) follows from Lemma \ref{l22} and the statements (a) of Proposition \ref{p25}.
\end{proof}

\section{Proof of Theorems \ref{t12} and \ref{t15}}\label{s3}

We start by checking that the sequences $\alpha^j$ and $\gamma^j$ in Theorem \ref{t12} are well defined:

\begin{lemma}\label{l31}
If $S$ is  a regular alphabet--base system, then
\begin{equation*}
\lambda<
q_j\left(\frac{d_{j+1}+\lambda}{q_{j-1}}-\frac{d_j}{q_j}\right)
\le\Lambda,\quad j=0,\ldots,M-1,
\end{equation*}
and
\begin{equation*}
\lambda\le
q_j\left(\frac{d_{j-1}+\Lambda}{q_{j-1}}-\frac{d_j}{q_j}\right)
<\Lambda,\quad
j=1,\ldots,M,
\end{equation*}
so that the corresponding expansions
\begin{equation}\label{31}
\alpha^j
=a\left(q_j\left(\frac{d_{j+1}+\lambda}{q_{j+1}}-\frac{d_j}{q_j}\right)\right)
\qtq{and}
\gamma^j
=m\left(
q_j\left(\frac{d_{j-1}+\Lambda}{q_{j-1}}-\frac{d_j}{q_j}\right)
\right)
\end{equation}
are well defined by Theorem \ref{t11}.
\end{lemma}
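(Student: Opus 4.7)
The proof is essentially an algebraic bookkeeping: each of the four inequalities in the statement, after clearing denominators, becomes one of the regularity conditions \eqref{13}, \eqref{14}, \eqref{15}. My plan is to verify the four inequalities in turn, and then invoke Theorem \ref{t11} (ii) to conclude that the quasi-greedy and quasi-lazy expansions are well defined.

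Concretely, I first rewrite
\begin{equation*}
q_j\left(\frac{d_{j+1}+\lambda}{q_{j+1}}-\frac{d_j}{q_j}\right)>\lambda
\qtq{as}
\frac{d_{j+1}+\lambda}{q_{j+1}}>\frac{d_j+\lambda}{q_j},
\end{equation*}
which is precisely \eqref{13}, and
\begin{equation*}
q_j\left(\frac{d_{j+1}+\lambda}{q_{j+1}}-\frac{d_j}{q_j}\right)\le\Lambda
\qtq{as}
\frac{d_{j+1}+\lambda}{q_{j+1}}\le \frac{d_j+\Lambda}{q_j},
\end{equation*}
which is \eqref{15}. This handles the first displayed inequality of the lemma for all $j<M$. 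Symmetrically, the inequality
\begin{equation*}
q_j\left(\frac{d_{j-1}+\Lambda}{q_{j-1}}-\frac{d_j}{q_j}\right)\ge\lambda
\end{equation*}
is equivalent to $\frac{d_j+\lambda}{q_j}\le\frac{d_{j-1}+\Lambda}{q_{j-1}}$, which is again \eqref{15} (applied at index $j-1$), and
\begin{equation*}
q_j\left(\frac{d_{j-1}+\Lambda}{q_{j-1}}-\frac{d_j}{q_j}\right)<\Lambda
\end{equation*}
is equivalent to $\frac{d_{j-1}+\Lambda}{q_{j-1}}<\frac{d_j+\Lambda}{q_j}$, which is \eqref{14}. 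This settles the second displayed inequality for $j>0$.

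Once both double inequalities are verified, the arguments inside $a(\cdot)$ and $m(\cdot)$ in \eqref{31} belong to $[\lambda,\Lambda]$, so Theorem \ref{t11} (ii) guarantees that the quasi-greedy expansion $\alpha^j$ and the quasi-lazy expansion $\gamma^j$ exist, completing the proof. There is no real obstacle here: the lemma is a short verification that the regularity conditions were chosen so as to make the definitions \eqref{16}--\eqref{17} meaningful.
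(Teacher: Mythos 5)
Your proof is correct and follows exactly the paper's route: the paper's own proof is the one-line remark that the inequalities follow from the regularity conditions \eqref{13}--\eqref{15}, and you have simply written out the clearing-of-denominators that makes each of the four inequalities literally one of \eqref{13}, \eqref{14}, or \eqref{15} (the latter applied at index $j$ or $j-1$), before invoking Theorem \ref{t11} (ii). No further comment is needed.
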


\begin{proof}
The stated inequalities follow from the definition \eqref{13}--\eqref{15} of the regularity.
\end{proof}

Now we may clarify the relations between greedy and quasi-greedy expansions, and between lazy and quasi-lazy expansions:

\begin{proposition}\label{p32}
Assume that $S$ is  a regular alphabet--base system, and let $x\in[\lambda,\Lambda]$.

\begin{enumerate}[\upshape (i)]
\item If $b(x)$ is infinite, then $a(x)=b(x)$.
Otherwise, $b(x)=j_1\cdots j_n0^{\infty}$  with  $j_n>0$, and then $a(x)=j_1\cdots j_{n-1}(j_n-1)k_1k_2\cdots$ with
\begin{equation}\label{32}
k_1k_2\cdots=a\left(q_{j_n-1}\left(\frac{d_{j_n}+\lambda}{q_{j_n}}-\frac{d_{j_n-1}}{q_{j_n-1}}\right)\right).
\end{equation}
\item If $l(x)$ is co-infinite, then $m(x)=l(x)$.
Otherwise, $l(x)=j_1\cdots j_n M^\infty$ with $j_n<M$, and then $m(x)=j_1\cdots j_{n-1}(j_n+1)k_1k_2\cdots$ with
\begin{equation*}\label{33}
k_1k_2\cdots=m\left(
q_{j_n+1}\left(\frac{d_{j_n}+\Lambda}{q_{j_n}}-\frac{d_{j_n+1}}{q_{j_n+1}}\right)
\right).
\end{equation*}
\end{enumerate}
\end{proposition}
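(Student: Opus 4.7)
The plan is to establish (i) directly from the characterizations in Proposition \ref{p25}, and then to obtain (ii) from (i) by the reflection principle of Lemma \ref{l23}. For (i), when $b(x)$ is infinite the conclusion is immediate: $b(x)$ is itself an infinite subexpansion of $x$, and since every infinite subexpansion is a subexpansion we have $a(x)\preceq b(x)$, giving $a(x)=b(x)$.

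For the substantive case, suppose $b(x)=j_1\cdots j_n 0^\infty$ with $j_n>0$. First I would invoke Lemma \ref{l31} (with index $j_n-1$ in place of $j$) to confirm that the value
\[
v:=q_{j_n-1}\left(\frac{d_{j_n}+\lambda}{q_{j_n}}-\frac{d_{j_n-1}}{q_{j_n-1}}\right)
\]
lies in $(\lambda,\Lambda]$, so that $a(v)$ is a well-defined infinite sequence with $\pi(a(v))=v$. Next I would introduce the candidate $(J_i):=j_1\cdots j_{n-1}(j_n-1)\,a(v)$, which is infinite by construction, and record the key identity
\[
\pi\bigl((j_n-1)\,a(v)\bigr)
=\frac{d_{j_n-1}+v}{q_{j_n-1}}
=\frac{d_{j_n}+\lambda}{q_{j_n}}
=\pi(j_n 0^\infty),
\]
which immediately yields $\pi((J_i))=\pi(b(x))=x$.

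The heart of the proof is to show $(J_i)=a(x)$ by verifying the quasi-greedy characterization of Proposition \ref{p25}(iic), which amounts to checking the tail inequality at every position $m$ with $J_m<M$. I would break this into four cases according to the position of $m$ relative to $n$: for $m<n-1$ the inequality reduces, via the key identity above, to the strict greedy inequality that $b(x)$ satisfies at position $m$; for $m=n-1$ the same reduction applies; for $m=n$ the inequality becomes $\pi(a(v))\le v$, holding with equality; and for $m>n$ the inequality is exactly the quasi-greedy characterization of $a(v)$. Strict monotonicity of the quasi-greedy map (Proposition \ref{p25}(iib)) then forces $(J_i)=a(x)$.

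Part (ii) would follow from (i) by applying it to the dual system $S'$ at the point $-x$ and then translating through Lemma \ref{l23}: the reflection $(j_i)\mapsto\overline{(j_i)}$ interchanges sub- and superexpansions, and because $j>0$ in $S'$ is equivalent to $\overline{j}<M$ in $S$, it also swaps ``infinite'' with ``co-infinite.'' Substituting $d_j'=-d_{M-j}$, $q_j'=q_{M-j}$, $\lambda'=-\Lambda$ in formula \eqref{32} for $S'$ produces the formula claimed in (ii). The most delicate step is the case $m=n-1$ in the quasi-greedy verification, where one must recognize that $\pi((j_n-1)\,a(v))$ collapses to $\pi(j_n 0^\infty)$ so that the needed inequality is furnished by the strict greedy inequality for $b(x)$; the rest of the argument is essentially bookkeeping.
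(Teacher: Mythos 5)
Your proposal is correct, and it shares the paper's skeleton: the same candidate sequence $j_1\cdots j_{n-1}(j_n-1)\,a(v)$, the same key identity $\pi\bigl((j_n-1)\,a(v)\bigr)=\pi(j_n0^\infty)$ showing the candidate is an expansion of $x$, and the same reduction of (ii) to (i) by reflection through Lemma \ref{l23}. Where you genuinely diverge is in proving that the candidate equals $a(x)$. The paper argues by two-sided lexicographic maximality: the candidate is an \emph{infinite} expansion of $x$, hence $\preceq a(x)$ by the definition of $a(x)$ as the largest infinite subexpansion; conversely, $a(x)\prec b(x)$ forces the first $n$ digits of $a(x)$ to be $\preceq j_1\cdots j_{n-1}(j_n-1)$, and in the equality case the tails have equal $\pi$-value, so the quasi-greediness of $a(v)$ settles the comparison. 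You instead verify the tail-inequality characterization of Proposition \ref{p25}(iic) at every position $m$, splitting into $m<n$ (where the key identity converts the required bound into the strict greedy inequality \eqref{212} for $b(x)$), $m=n$ (equality), and $m>n$ (quasi-greediness of $a(v)$); this is a clean, fully local argument that avoids the prefix-comparison step, at the cost of leaning more heavily on the machinery of Proposition \ref{p25}. Two small points worth making explicit: the conclusion from your verification is that the candidate is quasi-greedy, i.e.\ equals $a(y)$ for $y=\pi$ of the candidate $=x$ (monotonicity of $a$ is not really what is needed there); and the claim that the candidate is an infinite sequence requires observing that $a(v)\ne 0^\infty$, which follows from $v>\lambda$ as guaranteed by Lemma \ref{l31} --- the paper spells this out, and you should too, since prepending a nonzero prefix to $0^\infty$ would produce a finite sequence in the paper's terminology.
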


\begin{proof}
(i) If $b(x)$ is infinite, then it is also the largest \emph{infinite} expansion of $x$, so that $a(x)=b(x)$.
Now let $b(x)=j_1\cdots j_n0^{\infty}$
with  $j_n>0$, and set $c(x)=j_1\cdots j_{n-1}(j_n-1)k_1k_2\cdots.$
First of all, $c(x)$ is an expansion of $x$ because
\begin{align*}
x=\pi(j_1\cdots j_n 0^\infty)&=\pi(j_1\cdots j_{n-1})+\frac{d_{j_n}+\lambda}{q_{j_1}\cdots q_{j_n}}\\
&=\pi(j_1\cdots j_{n-1})+\frac{d_{j_n-1}}{q_1\cdots q_{j_{n-1}}q_{j_n-1}}+\left(\frac{d_{j_n}+\lambda}{q_{j_1}\cdots q_{j_n}}-\frac{d_{j_n-1}}{q_1\cdots q_{j_{n-1}}q_{j_n-1}}\right)\\
&=\pi(j_1\cdots j_{n-1})+\frac{d_{j_n-1}+q_{j_n-1}\left(\frac{d_{j_n}+\lambda}{q_{j_n}}-\frac{d_{j_n-1}}{q_{j_n-1}}\right)}{q_1\cdots q_{j_{n-1}}q_{j_n-1}}\\
&=\pi(j_1\cdots j_{n-1})+\frac{d_{j_n-1}+\pi(k_1k_2\cdots)}{q_1\cdots q_{j_{n-1}}q_{j_n-1}}
\\
&=\pi(j_1\cdots j_{n-1}(j_n-1)k_1k_2\cdots).
\end{align*}
Since $(k_i)$ is an infinite expansion by \eqref{32}, and
\begin{equation*}
\pi(k_1k_2\cdots)=q_{j_n-1}\left(\frac{d_{j_n}+\lambda}{q_{j_n}}-\frac{d_{j_n-1}}{q_{j_n-1}}\right)>\lambda
\end{equation*}
by \eqref{13}, $(k_i)\ne 0^{\infty}$, and hence $(k_i)$ has infinitely many nonzero elements.
Consequently, $c(x)$ is an \emph{infinite} expansion of $x$, and therefore $c(x)\preceq a(x)$.

For the proof of the converse inequality $a(x)\preceq c(x)$ we write $a(x)=(a_i)$. We deduce from the relation $a(x)\prec b(x)$ that $a_1\cdots a_n\prec j_1\cdots j_n$, or equivalently $a_1\cdots a_n\preceq j_1\cdots j_{n-1}(j_n-1)$.
If this inequality is strict, then $a(x)\prec c(x)$.
If we have an equality, then the equality
\begin{equation*}
\pi(a(x))=\pi(c(x)),
\end{equation*}
i.e.,
\begin{equation*}
\pi(a_1a_2\cdots)=\pi(j_1\cdots j_{n-1}(j_n-1)k_1k_2\cdots)
\end{equation*}
reduces to
\begin{equation*}
\pi(a_{n+1}a_{n+2}\cdots)=\pi(k_1k_2\cdots).
\end{equation*}
Since $k_1k_2\cdots$ is a quasi-greedy sequence and $a_{n+1}a_{n+2}\cdots$ is an infinite sequence, we conclude that $a_{n+1}a_{n+2}\cdots\preceq k_1k_2\cdots,$ and the inequality $a(x)\preceq c(x)$ follows:
\begin{equation*}
a(x)=j_1\cdots j_{n-1}(j_n-1)a_{n+1}a_{n+2}\cdots
\preceq j_1\cdots j_{n-1}(j_n-1)k_1k_2\cdots
=c(x).
\end{equation*}
\medskip

(ii) Since $S'$ is regular by Proposition \ref{p25}, this follows from (i) by reflection, using Lemma \ref{l23} (iii).
\end{proof}

The following proposition is crucial for the proof of Theorem \ref{t12}.
The proof is based on a generalization  of a construction first given in \cite{EJK1990}, and later extended in \cite{BK2007}, with some delicate  modifications due to the  possibility of multiple bases.
We use the notation \eqref{31} of Lemma \ref{l31}, and we write $\alpha^j=(\alpha^j_i)$.
We will also use the notation
\begin{equation*}
\pi(j_1\cdots j_m)
:=\sum_{i=1}^m\frac{d_{j_i}}{q_{j_1}\cdots q_{j_i}}
\end{equation*}
for non-empty finite words, and the identities
\begin{equation*}
\pi(j_1j_2\cdots)
=\pi(j_1\cdots j_m)+\frac{\pi(j_{m+1}j_{m+2}\cdots)}{q_{j_1}\cdots q_{j_m}},\quad m=1,2,\ldots .
\end{equation*}

\begin{proposition}\label{p33}
Let $S$ be a regular system.
\begin{enumerate}[\upshape (i)]
\item
A sequence $(j_i)$ is greedy if and only if
\begin{equation}\label{34}
(j_{n+i})\prec \alpha^{j_n}\qtq{whenever}j_n<M.
\end{equation}

\item An infinite sequence $(j_i)$ is quasi-greedy if and only if
\begin{equation}\label{35}
(j_{n+i})\preceq \alpha^{j_n}\qtq{whenever}j_n<M.
\end{equation}

\item A sequence $(j_i)$ is lazy if and only if
\begin{equation*}\label{36}
(j_{n+i})\succ\gamma^{j_n}
\qtq{whenever}j_n>0.
\end{equation*}

\item A co-infinite sequence $(j_i)$ is quasi-lazy if and only if
\begin{equation*}\label{37}
(j_{n+i})\succeq\gamma^{j_n}
\qtq{whenever}j_n>0.
\end{equation*}
\end{enumerate}
\end{proposition}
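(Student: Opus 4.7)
I plan to prove (i) and (ii) directly; parts (iii) and (iv) will follow by applying the reflection principle of Lemma \ref{l23} to the dual system $S'$ (which is itself regular by Proposition \ref{p25}).

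For the forward direction of (ii), suppose $(j_i)=a(x)$ is quasi-greedy. Proposition \ref{p25} (iic) gives $\pi(j_{n+1}j_{n+2}\cdots)\le\pi(\alpha^{j_n})$ whenever $j_n<M$. The shift $(j_{n+i})$ of the infinite sequence $(j_i)$ is again infinite, and by definition $\alpha^{j_n}=a(\pi(\alpha^{j_n}))$ is the largest infinite subexpansion of $\pi(\alpha^{j_n})$ (Lemma \ref{l22} (ii)); hence $(j_{n+i})\preceq\alpha^{j_n}$. For the forward direction of (i), when $b(x)$ is infinite it coincides with $a(x)$, so we just obtained $(j_{n+i})\preceq\alpha^{j_n}$, and the strict $\pi$-inequality from Proposition \ref{p25} (ic) precludes equality; when $b(x)=k_1\cdots k_m 0^\infty$ is finite, Proposition \ref{p32} (i) gives $a(x)=k_1\cdots k_{m-1}(k_m-1)\alpha^{k_m-1}_1\alpha^{k_m-1}_2\cdots$, and a length-induction (using the self-similarity $(j_{n+i})=b(\pi(j_{n+1}j_{n+2}\cdots))$ to peel off matching prefixes against $b(v_{j_n})$ and $a(v_{j_n})$) reduces the problem to the trivial base case $(j_{n+i})=0^\infty\prec\alpha^0$.

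The core of the argument is the converse, which hinges on the following \emph{excess-propagation lemma}: if $(j_i)$ satisfies $(j_{n+i})\preceq\alpha^{j_n}$ whenever $j_n<M$, then $\pi(j_{n+1}j_{n+2}\cdots)\le\pi(\alpha^{j_n})$ at each such index. Given the lemma, the $\Leftarrow$ of (ii) follows directly from Proposition \ref{p25} (iic). For the $\Leftarrow$ of (i), the strict hypothesis $(j_{n+i})\prec\alpha^{j_n}$ combined with the lemma applied at $n+k$ (where $k$ is the first index of disagreement) and the strict inequalities of \eqref{13}, together with the infiniteness of $\alpha^{j_n}$ guaranteed by Lemma \ref{l31} via \eqref{211}, upgrade $\le$ to $<$; Proposition \ref{p25} (ic) then applies.

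I prove the lemma by contradiction. Assume $\epsilon_0:=\pi(j_{n_0+1}\cdots)-\pi(\alpha^{j_{n_0}})>0$ at some $n_0$ with $j_{n_0}<M$. Then $(j_{n_0+i})\neq\alpha^{j_{n_0}}$, so the hypothesis forces strict $(j_{n_0+i})\prec\alpha^{j_{n_0}}$; let $k\ge 1$ be the first index of disagreement, so $j_{n_1}<\alpha^{j_{n_0}}_k\le M$ at $n_1:=n_0+k$. Expanding
\[
\pi(\alpha^{j_{n_0}})-\pi(j_{n_0+1}\cdots)=\frac{\pi(\alpha^{j_{n_0}}_k\alpha^{j_{n_0}}_{k+1}\cdots)-\pi(j_{n_1}j_{n_1+1}\cdots)}{q_{\alpha^{j_{n_0}}_1}\cdots q_{\alpha^{j_{n_0}}_{k-1}}}
\]
and bounding $\pi(\alpha^{j_{n_0}}_k\alpha^{j_{n_0}}_{k+1}\cdots)>(d_{j_{n_1}+1}+\lambda)/q_{j_{n_1}+1}$ via \eqref{13} and the infiniteness of $\alpha^{j_{n_0}}$ (which forces $\pi(\alpha^{j_{n_0}}_{k+1}\cdots)>\lambda$ by \eqref{211}), a direct computation yields
\[
\epsilon_1:=\pi(j_{n_1+1}\cdots)-\pi(\alpha^{j_{n_1}})\ge q\,\epsilon_0,
\]
where $q:=\min_j q_j>1$. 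Iterating gives $\epsilon_m\ge q^m\epsilon_0\to\infty$, contradicting the uniform bound $\epsilon_m\le\Lambda-\lambda$ afforded by Lemma \ref{l21} (ii).

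The main obstacle is the propagation inequality $\epsilon_1\ge q\,\epsilon_0$: it requires carefully combining \eqref{13} (strict for distinct indices), the infiniteness of $\alpha^{j_n}$ (which upgrades the bound on $\pi(\alpha^{j_{n_0}}_k\cdots)$ strictly above the threshold), and the recursion $\pi(j_{n_1}\cdots)=(d_{j_{n_1}}+\pi(j_{n_1+1}\cdots))/q_{j_{n_1}}$. A secondary delicate point is the finite case of (i) forward, which I would dispatch by length-induction reducing to the base case $(j_{n+i})=0^\infty$.
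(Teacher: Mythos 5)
Your overall architecture matches the paper's: reduce everything to the $\pi$-inequalities of Proposition \ref{p25}, prove (i)--(ii), and obtain (iii)--(iv) by reflection via Lemma \ref{l23}. But your treatment of the hard direction (lexicographic conditions $\Rightarrow$ $\pi$-inequalities) is genuinely different and, as far as I can check, correct. The paper runs the Erd\H os--Jo\'o--Komornik block decomposition globally: it constructs the whole chain of indices $k_0<k_1<\cdots$ and evaluates $\pi(j_{n+1}j_{n+2}\cdots)$ by the telescoping computation \eqref{39}, with a separate case analysis in part (ii) for when the chain terminates. Your excess-propagation lemma performs only one step of that construction and turns it into a growth estimate: writing $Q$ for the product of the bases along the agreeing prefix, one gets $\epsilon_1>q_{j_{n_1}}Q\,\epsilon_0\ge q\,\epsilon_0$, using the identity $\frac{d_{j_{n_1}}+\pi(\alpha^{j_{n_1}})}{q_{j_{n_1}}}=\frac{d_{j_{n_1}+1}+\lambda}{q_{j_{n_1}+1}}$ and the strict lower bound $\pi(\alpha^{j_{n_0}}_k\alpha^{j_{n_0}}_{k+1}\cdots)>\frac{d_{j_{n_1}+1}+\lambda}{q_{j_{n_1}+1}}$ coming from \eqref{13} and the infiniteness of $\alpha^{j_{n_0}}$ (via the lower half of \eqref{211}). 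A putative positive excess therefore blows up geometrically, contradicting $\epsilon_m\le\Lambda-\lambda$. I verified this one-step estimate and the iteration (each step keeps $j_{n_{m+1}}<M$ and $\epsilon_{m+1}>0$, so it never stalls); it buys a shorter computation and absorbs the paper's terminating-chain case automatically, at the price of being a pure contradiction argument. Your upgrade from $\le$ to $<$ in the converse of (i) is also sound.

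The one place you must tighten is the forward direction of (i) for \emph{finite} greedy sequences. Your sketch via Proposition \ref{p32} and a ``length-induction peeling off prefixes'' against $b$ and $a$ of $\pi(\alpha^{j_n})$ is too vague to assess as written, and it is also more machinery than needed: suppose $(j_{n+i})\succeq\alpha^{j_n}$ for some $n$ with $j_n<M$. Equality is impossible since $\alpha^{j_n}$ is infinite and $(j_{n+i})$ is finite. If $(j_{n+i})\succ\alpha^{j_n}$ with first disagreement at position $k$, then $j_{n+k}\ge\alpha^{j_n}_k+1$, and the chain $\pi(j_{n+k}j_{n+k+1}\cdots)\ge\frac{d_{j_{n+k}}+\lambda}{q_{j_{n+k}}}\ge\frac{d_{\alpha^{j_n}_k+1}+\lambda}{q_{\alpha^{j_n}_k+1}}\ge\pi(\alpha^{j_n}_k\alpha^{j_n}_{k+1}\cdots)$ --- using \eqref{13} and the quasi-greedy inequality \eqref{213} for $\alpha^{j_n}$ at position $k$ --- yields $\pi(j_{n+1}j_{n+2}\cdots)\ge\pi(\alpha^{j_n})$, contradicting the strict inequality of Proposition \ref{p25} (ic). (The paper instead replaces the tail $0^\infty$ by an auxiliary infinite subexpansion built from a quasi-greedy expansion of a truncated value; either route works, but one of them should be spelled out.)
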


\begin{proof}
(i) Fix an arbitrary sequence $(j_i)$.
By Proposition \ref{p25} we have to show that the condition \eqref{34} is equivalent to the following property:
\begin{equation}\label{38}
\pi(j_{n+1}j_{n+2}\cdots)
<q_{j_n}\left(\frac{d_{j_n+1}+\lambda }{q_{j_n+1}}-\frac{d_{j_n}}{q_{j_n}}\right)\qtq{whenever}j_n<M.
\end{equation}

Let us denote by $x<y$ the two sides of this inequality.
First we assume \eqref{38}, and we consider an index $n$ such that $j_n<M$.
Then $(j_{n+i})$ is an expansion of $x$, and hence by \eqref{38} it is also a subexpansion of $y$.
If the sequence $(j_{n+i})$ is infinite, then, since  $\alpha^{j_n}$ is the largest  subexpansion of $y$ by definition, hence we infer that
$(j_{n+i})\preceq \alpha^{j_n}$.
This implies \eqref{34} because
$\pi((j_{n+i}))=x<y=\pi(\alpha^{j_n})$ and therefore $(j_{n+i})\ne \alpha^{j_n}$.

If the sequence $(j_{n+i})$ has a last element $j_m>0$ ($m>n$), then we consider the number
\begin{equation*}
z:=\min\set{q_{j_{n+1}}\cdots q_{j_m}(y-\pi(j_{n+1}\cdots j_m)),\Lambda}.
\end{equation*}
We have
\begin{equation*}
y>x=\pi(j_{n+1}\cdots j_m)+\frac{\pi(0^{\infty})}{q_{j_{n+1}}\cdots q_{j_m}}
=\pi(j_{n+1}\cdots j_m)+\frac{\lambda}{q_{j_{n+1}}\cdots q_{j_m}},
\end{equation*}
and therefore $\lambda<z\le\Lambda$, so that the quasi-greedy expansion $k_1k_2\cdots$ of $z$ has infinitely many nonzero elements.
Since
\begin{equation*}
\pi(j_1\cdots j_mk_1k_2\cdots)
=\pi(j_1\cdots j_m)+\frac{\pi(k_1k_2\cdots)}{q_{j_{n+1}}\cdots q_{j_m}}
=\pi(j_1\cdots j_m)+\frac{z}{q_{j_{n+1}}\cdots q_{j_m}}
\le y,
\end{equation*}
it follows that $j_1\cdots j_mk_1k_2\cdots$ is an infinite subexpansion of $y$, and hence
it is lexicographically smaller than or equal to $a(y)=\alpha^{j_n}$.
Therefore
\begin{equation*}
(j_i)
<j_1\cdots j_mk_1k_2\cdots
\le \alpha^{j_n},
\end{equation*}
proving \eqref{34} again.

Now we assume \eqref{34}, and we consider an index $n$ such that $j_n<M$.
We have to prove the inequality \eqref{38}.
Fix an $n\ge 1$ satisfying $j_n<M$.
Starting with $k_0:=n$, we construct a sequence $k_0<k_1<k_2<\cdots$ of integers as follows.
If $k_p$ has already been defined for some $p\ge 0$ and $j_{k_p}<M$, then
 $(j_{k_p+i})<\alpha^{j_{k_p}}$ by \eqref{34}.
Therefore there exists an integer $k_{p+1}>k_p$ such that
\begin{equation*}
j_{k_p+1}\cdots j_{k_{p+1}-1}=\alpha^{j_{k_p}}_1\cdots \alpha^{j_{k_p}}_{k_{p+1}-k_p-1}
\qtq{and}
j_{k_{p+1}}<\alpha^{j_{k_p}}_{k_{p+1}-k_p}.
\end{equation*}
Now \eqref{38} is obtained by the following computation, where we use for brevity the convention $q_{j_{k_0+1}}\cdots q_{j_{k_0}}:=1$:\begin{equation}\label{39}
\begin{split}
\pi&(j_{n+1}j_{n+2}\cdots)\\
&=\sum_{p=0}^{\infty}\frac{\pi(j_{k_p+1}\cdots j_{k_{p+1}})}{q_{j_{k_0+1}}\cdots q_{j_{k_p}}}\\
&=\sum_{p=0}^{\infty}\frac{\pi(\alpha^{j_{k_p}}_1\cdots \alpha^{j_{k_p}}_{k_{p+1}-k_p-1}j_{k_{p+1}})}{q_{j_{k_0+1}}\cdots q_{j_{k_p}}}\\
&=\sum_{p=0}^{\infty}\frac{1}{q_{j_{k_0+1}}\cdots q_{j_{k_p}}}\left(\pi(\alpha^{j_{k_p}}_1\cdots \alpha^{j_{k_p}}_{k_{p+1}-k_p-1}(j_{k_{p+1}}+1)0^{\infty})\right.\\
&\hspace{5cm}\left.-\frac{q_{j_{k_{p+1}}}}{q_{j_{k_p+1}}\cdots q_{j_{k_{p+1}}}}\left(\frac{d_{(j_{k_{p+1}}+1)}+\lambda}{q_{(j_{k_{p+1}}+1)}}-\frac{d_{j_{k_{p+1}}}}{q_{j_{k_{p+1}}}}\right)\right)\\
&<\sum_{p=0}^{\infty}\frac{1}{q_{j_{k_0+1}}\cdots q_{j_{k_p}}}\left(\pi(\alpha^{j_{k_p}})-\frac{q_{j_{k_{p+1}}}}{q_{j_{k_p+1}}\cdots q_{j_{k_{p+1}}}}\left(\frac{d_{(j_{k_{p+1}}+1)}+\lambda}{q_{(j_{k_{p+1}}+1)}}-\frac{d_{j_{k_{p+1}}}}{q_{j_{k_{p+1}}}}\right)\right)\\
&=\sum_{p=0}^{\infty}\frac{q_{j_{k_p}}}{q_{j_{k_0+1}}\cdots q_{j_{k_p}}}\left(\frac{d_{j_{k_p}+1}+\lambda}{q_{j_{k_p}+1}}-\frac{d_{j_{k_p}}}{q_{j_{k_p}}}\right)\\
&\hspace{5cm}-\frac{q_{j_{k_{p+1}}}}{q_{j_{k_0+1}}\cdots q_{j_{k_{p+1}}}}\left(\frac{d_{{(j_{k_{p+1}}+1)}}+\lambda}{q_{(j_{k_{p+1}}+1)}}-\frac{d_{j_{k_{p}+1}}}{q_{j_{k_{p+1}}}}\right)\\
&=q_{j_{k_0}}\left(\frac{d_{j_{k_0}+1}+\lambda}{q_{j_{k_0}+1}}-\frac{d_{j_{k_0}}}{q_{j_{k_0}}}\right).
\end{split}
\end{equation}
Let us explain the third equality.
It is sufficient to show that
\begin{multline*}
\pi(\alpha^{j_{k_p}}_1\cdots \alpha^{j_{k_p}}_{k_{p+1}-k_p-1}j_{k_{p+1}})
=\pi(\alpha^{j_{k_p}}_1\cdots \alpha^{j_{k_p}}_{k_{p+1}-k_p-1}(j_{k_{p+1}}+1)0^{\infty})\\
-\frac{q_{j_{k_{p+1}}}}{q_{j_{k_p+1}}\cdots q_{j_{k_{p+1}}}}\left(\frac{d_{(j_{k_{p+1}}+1)}+\lambda}{q_{(j_{k_{p+1}}+1)}}-\frac{d_{j_{k_{p+1}}}}{q_{j_{k_{p+1}}}}\right)
\end{multline*}
for every $p\ge 0$.
Subtracting $\pi(\alpha^{j_{k_p}}_1\cdots \alpha^{j_{k_p}}_{k_{p+1}-k_p-1})$ and  multiplying by $q_{j_{k_p+1}}\cdots q_{j_{k_{p+1}-1}}$, this is equivalent to
\begin{equation*}
\frac{d_{j_{k_{p+1}}}}{q_{j_{k_{p+1}}}}
=\frac{d_{(j_{k_{p+1}}+1)}+\pi(0^{\infty})}{q_{(j_{k_{p+1}}+1)}}
-\left(\frac{d_{(j_{k_{p+1}}+1)}+\lambda}{q_{(j_{k_{p+1}}+1)}}-\frac{d_{j_{k_{p+1}}}}{q_{j_{k_{p+1}}}}\right),
\end{equation*}
and the  latter relation holds because $\pi(0^{\infty})=\lambda$.
The strict inequality in \eqref{39} follows from \eqref{211} implying that
\begin{equation*}
\pi(\alpha^{j_{k_p}}_1\cdots \alpha^{j_{k_p}}_{k_{p+1}-k_p-1}(j_{k_{p+1}}+1)0^{\infty})
\le \pi(\alpha^{j_{k_p}}_1\cdots \alpha^{j_{k_p}}_{k_{p+1}-k_p}0^{\infty})
<\pi(\alpha^{j_{k_p}}).
\end{equation*}
\medskip

(ii) Fix an arbitrary \emph{infinite} sequence $(j_i)$.
By Proposition \ref{p25} we have to show that the condition \eqref{35} is equivalent to the following property:
\begin{equation}\label{310}
\pi(j_{n+1}j_{n+2}\cdots)
\le q_{j_n}\left(\frac{d_{j_n+1}+\lambda }{q_{j_n+1}}-\frac{d_{j_n}}{q_{j_n}}\right)\qtq{whenever}j_n<M.
\end{equation}
We denote again by $x<y$ the two sides of this inequality.
First we assume \eqref{310}.
If $j_n<M$ for some $n\ge 1$, then $(j_{n+i})$ is an infinite expansion of $x$, and therefore also an infinite subexpansion of $y$.
Since by definition $\alpha^{j_n}$ is the largest infinite subexpansion of $y$, the relation \eqref{35} follows.

Now let $(j_i)$ be an infinite sequence satisfying the lexicographic conditions \eqref{35}.
We have to prove the inequalities  \eqref{310}.
Fix an $n\ge 1$ satisfying $j_n<M$.
Starting with $k_0:=n$, we construct a finite or infinite integer sequence $k_0<k_1<k_2<\cdots$ as follows.
If $k_p$ has already been defined for some $p\ge 0$ and $j_{k_p}<M$, then
 $(j_{k_p+i})\le \alpha^{j_{k_p}}$ by \eqref{35}.
If we have an equality, then we stop the construction.
Otherwise there exists an integer $k_{p+1}>k_p$ such that
\begin{equation*}
j_{k_p+1}\cdots j_{k_{p+1}-1}=\alpha^{j_{k_p}}_1\cdots \alpha^{j_{k_p}}_{k_{p+1}-k_p-1}
\qtq{and}
j_{k_{p+1}}<\alpha^{j_{k_p}}_{k_{p+1}-k_p}.
\end{equation*}
If we obtain an infinite sequence, then \eqref{310} is obtained by the same computation \eqref{38} as in (i).

The computation is similar but simpler if the construction of the sequence $(k_p)$ is stopped after some $k_m$.
Indeed, in case $m=0$ we  have an equality in \eqref{310}.
If $m\ge 1$, then we  modify the computation \eqref{39} as follows:
\begin{align*}
\pi&(j_{n+1}j_{n+2}\cdots)\\
&=\sum_{p=0}^{m-1}\frac{\pi(j_{k_p+1}\cdots j_{k_{p+1}})}{q_{j_{k_0+1}}\cdots q_{j_{k_p}}}
+\frac{\pi(\alpha^{j_{k_m}})}{q_{j_{k_0+1}}\cdots q_{j_{k_m}}}\\
&<\sum_{p=0}^{m-1}\frac{q_{j_{k_p}}}{q_{j_{k_0+1}}\cdots q_{j_{k_p}}}\left(\frac{d_{j_{k_p}+1}+\lambda}{q_{j_{k_p}+1}}-\frac{d_{j_{k_p}}}{q_{j_{k_p}}}\right)
-\frac{q_{j_{k_{p+1}}}}{q_{j_{k_0+1}}\cdots q_{j_{k_{p+1}}}}\left(\frac{d_{{(j_{k_{p+1}}+1)}}+\lambda}{q_{(j_{k_{p+1}}+1)}}-\frac{d_{j_{k_{p}+1}}}{q_{j_{k_{p+1}}}}\right)\\
&\qquad\qquad
+\frac{q_{j_{k_m}}}{q_{j_{k_0+1}}\cdots q_{j_{k_m}}}\left(\frac{d_{j_{k_m}+1}+\lambda}{q_{j_{k_m}+1}}-\frac{d_{j_{k_m}}}{q_{j_{k_m}}}\right)\\
&=q_{j_{k_0}}\left(\frac{d_{j_{k_0}+1}+\lambda}{q_{j_{k_0}+1}}-\frac{d_{j_{k_0}}}{q_{j_{k_0}}}\right).
\end{align*}
\medskip

Since $S'$ is regular by Proposition \ref{p25}, (iii) and (iv) follow from (i) and (ii) by reflection, using Lemma \ref{l23} (iii).
\color{black}
\end{proof}

\begin{proof}[Proof of Theorem \ref{t12}]
The theorem follows from Propositions \ref{p25} and \ref{p33}.
\end{proof}

\begin{proof}[Proof of Theorem \ref{t15}]
(i) We know that $\uu'_{S}$ contains $0^{\infty}$ and $M^{\infty}$.
On the other hand,
\begin{equation*}
\uu'_{S}\subseteq \uu'_{\ss_{M,q_{GR}}}
=\set{0^{\infty},M^{\infty}}
\end{equation*}
by Corollary \ref{c13} and Example \ref{e14}.
\medskip

(ii) It follows from our assumptions that
$0^m\alpha_{GR}\in\uu'_S$ for all $m\in\NN$.
\medskip

(iii) Since $\alpha_{KL}$ is also the \emph{greedy} expansion of $1$ in base $q_{KL}$, $\alpha_{M,q}$ converges component-wise to $\alpha_{KL}$ as $q\to q_{KL}$ by \cite[Proposition 2.6]{DK2010}.
Therefore there exists a base $q<q_{KL}$ such that
\begin{equation*}
\alpha^j\prec \alpha_{M,q}\prec \alpha^{KL}
\qtq{and}
\gamma^{M-j}\succ\overline{\alpha_{M,q}}\succ\overline{\alpha^{GR}}
\end{equation*}
for all $j<M$.
Then
\begin{equation*}
\uu_S'\subseteq\uu_{M,q}'
\end{equation*}
by Corollary \ref{c13}, and we conclude by observing that $\uu_{M,q}'$ is countable by Example \ref{e14}.
\medskip

(iv) It follows from our assumptions that
\begin{equation*}
\uu'_{S}\supseteq \uu'_{\ss_{M,q_{KL}}}.
\end{equation*}
We conclude by observing that the latter set has the power of continuum by Example \ref{e14}, and that the set of all sequences also has the power of continuum.
\end{proof}

\section{Proof of Theorem \ref{t16}}\label{s4}

We start with three lemmas.

\begin{lemma}\label{l41}
An  alphabet--base system  $S=\set{(d_0,q_0),(d_1,q_1)}$ is regular if and only if
\begin{equation}\label{41}
\frac{d_0}{q_0-1}<\frac{d_1}{q_1-1}
\qtq{and}
q_0\left(d_1+\frac{d_0}{q_0-1}\right)
\le q_1\left(d_0+\frac{d_1}{q_1-1}\right).
\end{equation}
In particular, $\set{(0,q_0),(1,q_1)}$ is regular for all $q_0, q_1\in(1,2]$.
\end{lemma}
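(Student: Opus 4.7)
The plan is simply to rewrite the three regularity conditions \eqref{13}--\eqref{15} explicitly for the two--element system and simplify, using the fact that $\lambda$ and $\Lambda$ are determined by the first inequality of \eqref{41}.

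For the forward direction, I would begin by invoking Lemma \ref{l24}: regularity implies semi-regularity, and then \eqref{29} gives $\frac{d_0}{q_0-1}<\frac{d_1}{q_1-1}$, which is the first inequality of \eqref{41}. Together with \eqref{210}, this identifies $\lambda=\frac{d_0}{q_0-1}$ and $\Lambda=\frac{d_1}{q_1-1}$. The key computation is the trivial observation that
$$\frac{d_0+\lambda}{q_0}=\frac{d_0+\tfrac{d_0}{q_0-1}}{q_0}=\frac{d_0}{q_0-1}=\lambda,$$
and symmetrically $\frac{d_1+\Lambda}{q_1}=\Lambda$. Condition \eqref{13} then collapses to $\lambda<\frac{d_1+\lambda}{q_1}$, and \eqref{14} to $\frac{d_0+\Lambda}{q_0}<\Lambda$; each of these rearranges at once to $\frac{d_0}{q_0-1}<\frac{d_1}{q_1-1}$. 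Finally, \eqref{15} reads $\frac{d_0+\Lambda}{q_0}\ge\frac{d_1+\lambda}{q_1}$, and after substituting the values of $\lambda,\Lambda$ and clearing denominators this is exactly the second inequality of \eqref{41}.

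For the reverse direction, assume both inequalities of \eqref{41}. The first one, together with \eqref{11}, forces $\lambda=\frac{d_0}{q_0-1}$ and $\Lambda=\frac{d_1}{q_1-1}$, and the same simplifications as above show that \eqref{13} and \eqref{14} hold (they are each equivalent to the first inequality), while \eqref{15} is equivalent to the second inequality. Hence $S$ is regular.

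For the last assertion, apply the equivalence to $d_0=0$, $d_1=1$ with $q_0,q_1\in(1,2]$. The first inequality of \eqref{41} becomes $0<\frac{1}{q_1-1}$, which is automatic. The second becomes $q_0\le\frac{q_1}{q_1-1}$, i.e.\ $q_0 q_1\le q_0+q_1$, i.e.\ $\frac{1}{q_0}+\frac{1}{q_1}\ge 1$, which holds because each reciprocal is at least $1/2$. I do not anticipate any serious obstacle: the whole proof is an explicit unpacking of the definitions, with the only mildly delicate point being that semi-regularity automatically orders $\frac{d_0}{q_0-1}$ and $\frac{d_1}{q_1-1}$ so that $\lambda$ and $\Lambda$ can be identified without ambiguity.
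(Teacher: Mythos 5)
Your proof is correct and follows essentially the same route as the paper's: both arguments reduce regularity to the two inequalities of \eqref{41} by substituting the explicit values of $\lambda$ and $\Lambda$ into \eqref{13}--\eqref{15} and simplifying. The only (harmless) differences are that you route the forward direction through Lemma \ref{l24} rather than computing the equivalences for \eqref{13} and \eqref{14} directly, and that you spell out the verification of the ``in particular'' clause, which the paper leaves implicit.
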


\begin{proof}
We have to check the properties \eqref{13}, \eqref{14} and \eqref{15}.
Since
\begin{align*}
&\frac{d_0+\frac{d_0}{q_0-1}}{q_0}
<\frac{d_1+\frac{d_0}{q_0-1}}{q_1}
\Longleftrightarrow
(q_1-1)\frac{d_0}{q_0-1}<d_1
\intertext{and}
&\frac{d_0+\frac{d_1}{q_1-1}}{q_0}
<\frac{d_1+\frac{d_1}{q_1-1}}{q_1}
\Longleftrightarrow
d_0<(q_0-1)\frac{d_1}{q_1-1},
\end{align*}
the conditions \eqref{13}, \eqref{14} are equivalent to the first inequality in \eqref{41}, and
\begin{equation*}
\lambda=\frac{d_0}{q_0-1}\qtq{and}
\Lambda=\frac{d_1}{q_1-1}.
\end{equation*}
Hence
\begin{equation*}
\frac{d_1+\lambda}{q_1}\le\frac{d_0+\Lambda}{q_0}
\Longleftrightarrow
\frac{d_1+\frac{d_0}{q_0-1}}{q_1}\le\frac{d_0+\frac{d_1}{q_1-1}}{q_0},
\end{equation*}
and therefore the condition \eqref{15} is equivalent to the second inequality in \eqref{41}.
\end{proof}

Henceforth we assume the regularity of  $S=\set{(d_0,q_0),(d_1,q_1)}$, and we  use the sequences
\begin{equation}\label{42}
\alpha^0=a\left(\frac{q_0}{q_1}\left(d_1+\frac{d_0}{q_0-1}\right)-d_0\right)
\qtq{and}
\gamma^1=m\left(\frac{q_1}{q_0}\left(d_0+\frac{d_1}{q_1-1}\right)-d_1\right)
\end{equation}
coming from \eqref{31}.

\begin{lemma}\label{l42}
Let $S=\set{(d_0,q_0),(d_1,q_1)}$ be a regular alphabet-base system.
Then:
\begin{enumerate}[\upshape (i)]
\item We have the equivalences
\begin{equation*}
\alpha^0\preceq(10)^\infty\Longleftrightarrow
\pi(\alpha^0)\le \pi(110^\infty)
\Longleftrightarrow
q_0\le 1+\frac{1}{q_1}.
\end{equation*}
\item If $q_1\le 2$, then the following implication holds:
\begin{equation*}
\alpha^0\preceq (10)^\infty\Longrightarrow \gamma^1\succeq 0(01)^\infty.
\end{equation*}
\item We have the equivalences
\begin{equation*}
\gamma^1\succeq(01)^\infty\Longleftrightarrow
\pi(\gamma^1)\ge \pi(001^\infty)
\Longleftrightarrow
q_1\le 1+\frac{1}{q_0}.
\end{equation*}
\item If $q_0\le 2$, then the following implication holds:
\begin{equation*}
\gamma^1\succeq(01)^\infty\Longrightarrow \alpha^0\preceq 1(10)^\infty.
\end{equation*}
\end{enumerate}
\end{lemma}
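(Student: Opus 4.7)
The plan is to prove parts (i) and (ii) directly, then obtain (iii) and (iv) from them by applying the reflection principle of Lemma \ref{l23} (iii) to the dual system $S'$. The relevant identifications are $\overline{\gamma^1}=\alpha'^0$ (so $\gamma^1$-statements in $S$ become $\alpha^0$-statements in $S'$), $\overline{0(01)^\infty}=1(10)^\infty$, and $q'_j=q_{1-j}$; under these, (i) applied to $S'$ becomes (iii) in $S$, and (ii) applied to $S'$ becomes (iv) in $S$.

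For part (i), the equivalence (b)$\Leftrightarrow$(c) will be proved by direct computation: I will substitute $\pi(\alpha^0)=y_0=q_0(d_1+\lambda)/q_1-d_0$ and the explicit expansion $\pi(110^\infty)=d_1/q_1+(d_1+\lambda)/q_1^2$ into the inequality in (b), apply $\lambda=d_0/(q_0-1)$ from Lemma \ref{l24}, and clear denominators to factor the result as $(q_0q_1-q_1-1)\bigl[d_1(q_0-1)-d_0(q_1-1)\bigr]\le 0$. The bracket is strictly positive by \eqref{29}, so the inequality reduces to $q_0q_1\le q_1+1$, i.e., (c). For (a)$\Leftrightarrow$(b), I will apply Proposition \ref{p32} (i) to $\pi(110^\infty)$: since $110^\infty$ is finite and greedy with last nonzero digit at position $2$, the formula gives $a(\pi(110^\infty))=10\alpha^0$, and the strict monotonicity of $a$ (Proposition \ref{p25} (iib)) translates (b) into $\alpha^0\preceq 10\alpha^0$. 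A three-case split on the first two digits of $\alpha^0$, combined with the quasi-greedy shift-constraint $(\alpha^0_{2+i})\preceq\alpha^0$ at a leading $0$-position (Proposition \ref{p33} (ii)), then shows that both $\alpha^0\preceq 10\alpha^0$ and $\alpha^0\preceq(10)^\infty$ are equivalent to the single condition ``$\alpha^0$ does not start with $11$'', yielding (a).

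For part (ii), assuming $\alpha^0\preceq(10)^\infty$ (equivalently $q_0\le 1+1/q_1$ by (i)) and $q_1\le 2$, I will dichotomise. If $\gamma^1\succeq(01)^\infty$ then $\gamma^1\succeq(01)^\infty\succ 0(01)^\infty$ and we are done. Otherwise $\gamma^1\prec(01)^\infty$, and I first observe that $0(01)^\infty$ is itself co-infinite and quasi-lazy: its $1$'s sit at the odd indices $\ge 3$, each shift after such a $1$ equals $(01)^\infty$, and $(01)^\infty\succeq\gamma^1$ by the case hypothesis, so the criterion of Proposition \ref{p33} (iv) is satisfied. The strict monotonicity of $m$ (Proposition \ref{p25} (ivb)) then reduces $\gamma^1\succeq 0(01)^\infty$ to the numerical inequality $y_1\ge\pi(0(01)^\infty)$. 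Using $d_0=\lambda(q_0-1)$, $d_1=\Lambda(q_1-1)$, and the identity $q_1\Lambda-d_1=\Lambda$, I will rewrite $D:=q_0(y_1-\pi(0(01)^\infty))$ as $D=[\Lambda-\pi((01)^\infty)]-(q_0-1)(q_1-1)(\Lambda-\lambda)$. A short calculation then gives $\Lambda-\pi((01)^\infty)=q_1\Delta/[(q_1-1)(q_0q_1-1)]$ and $(q_0-1)(q_1-1)(\Lambda-\lambda)=\Delta$, where $\Delta:=d_1(q_0-1)-d_0(q_1-1)>0$ by \eqref{29}. Dividing through by $\Delta$, the requirement $D\ge 0$ becomes $q_0\le(2q_1-1)/[q_1(q_1-1)]$. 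The two hypotheses chain as $q_0\le 1+1/q_1\le(2q_1-1)/[q_1(q_1-1)]$: the first step is the assumption (via (i)), and the second rearranges to $q_1^2\le 2q_1$, i.e., $q_1\le 2$.

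The hard part will be recognising the clean factored form of $D$ in the analysis of (ii): once that algebraic simplification is made and the auxiliary equivalence $1+1/q_1\le(2q_1-1)/[q_1(q_1-1)]\Leftrightarrow q_1\le 2$ is noted, everything falls into place via previously established propositions and routine lexicographic comparisons. Checking that $0(01)^\infty$ is quasi-lazy in Case~2 of (ii) is the only other step that uses the case hypothesis in an essential way, and it is immediate from Proposition \ref{p33} (iv).
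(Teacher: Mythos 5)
Your proposal is correct and follows essentially the same route as the paper: part (i) via the identification $a(\pi(110^\infty))=10\alpha^0$ from Proposition \ref{p32} (i) plus monotonicity and a direct computation (your factored form $(q_0q_1-q_1-1)\Delta\le 0$ is the paper's triple product in different bookkeeping), part (ii) via the same dichotomy, the quasi-laziness of $0(01)^\infty$ under the case hypothesis, and the same threshold $q_0\le(2q_1-1)/[q_1(q_1-1)]$ chained through $q_0\le 1+1/q_1$ and $q_1\le 2$, with (iii)--(iv) by reflection. The only cosmetic deviation is in the converse half of (i)'s first equivalence, where you argue by a case split on the first two digits of $\alpha^0$ while the paper inspects the greedy expansion of $\pi(\alpha^0)$ starting with $110^j1$; both are sound.
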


\begin{proof}
(i) If $ \pi(\alpha^0)\le \pi(110^\infty)$, since $10\alpha^0$ is the   quasi-greedy expansion of $\pi(110^\infty)$ by Proposition \ref{p32} (i), we infer from Proposition \ref{p25} (ii) that $10\alpha^0\succeq\alpha^0$  and this implies $\alpha^0\preceq(10)^\infty$.

On the other hand, if $ \pi(\alpha^0)>\pi(110^\infty)$, then the greedy expansion of $\pi(\alpha^0)$ starts with $110^j1$ for some nonnegative integer $j$.
By  Proposition \ref{p32} (i) this implies that $\alpha^0$
starts with $11$, and therefore $\alpha^0\succ (10)^\infty$.
This finishes the proof of the first equivalence.

Next we obtain by a direct computation the following equality:
\begin{align*}
\pi(110^\infty)-\pi(\alpha^0)
&=d_1\left(\frac{1}{q_1}+\frac{1}{q_1^2}\right)+\frac{d_0}{q_1^2(q_0-1)}
-\frac{q_0}{q_1}\left(d_1+\frac{d_0}{q_0-1}\right)+d_0\\
&=\left(\frac{d_1}{q_1-1}-\frac{d_0}{q_0-1}\right)
\cdot\frac{q_1-1}{q_1}\cdot
\left(1+\frac{1}{q_1}-q_0\right).
\end{align*}
The second equivalence hence follows by observing that the sign of the last expression is equal to that of its last factor by the regularity of $S$.
\medskip

(ii) Assume that $\alpha^0\preceq (10)^\infty$.
If $\gamma^1\succ (01)^\infty$, then the implication holds.
Henceforth we assume that $\gamma^1\preceq (01)^\infty$.
Then $0(01)^\infty$ is a quasi-lazy sequence by Theorem \ref{t12} (iv), and therefore
\begin{equation*}
\gamma^1\succeq 0(01)^\infty
\Longleftrightarrow \pi(\gamma^1)\ge \pi(0(01)^\infty).
\end{equation*}
In view of this equivalence it suffices to show the second inequality.
We have
\begin{align*}
\pi(\gamma^1)-\pi(0(01)^\infty)
&=\frac{q_1}{q_0}\left(d_0+\frac{d_1}{q_1-1}\right)-d_1
-\left(\frac{d_0}{q_0}+\frac{d_0q_1+d_1}{q_0(q_0q_1-1)}\right)\\
&=\left(\frac{d_1}{q_1-1}-\frac{d_0}{q_0-1}\right)
\cdot\frac{(q_0-1)q_1(q_1-1)}{q_0(q_0q_1-1)}\cdot
\left(\frac{1}{q_1}+\frac{1}{q_1-1}-q_0\right),
\end{align*}
by a direct computation.
Since $S$ is regular, it remains to show that the last factor is nonnegative.
Using (i), this follows from our assumptions $\alpha^0\preceq (10)^\infty$ and $q_1\le 2$:
\begin{equation*}
\frac{1}{q_1}+\frac{1}{q_1-1}-q_0
\ge \frac{1}{q_1}+1-q_0
\ge 0.
\end{equation*}

\medskip
(iii) and (iv) follow from (i) and (ii) by reflection.
\end{proof}

\begin{lemma}\label{l43}
Apart from $0^{\infty}$ and $1^{\infty}$, no sequence $(c_i)\in\set{0,1}^{\NN}$ satisfies the following conditions:
\begin{align}
&(c_{n+i})\prec (10)^{\infty}\qtq{whenever}c_n=0;\label{43}\\
&(c_{n+i})\succ 0(01)^{\infty}\qtq{whenever}c_n=1.\label{44}
\end{align}
\end{lemma}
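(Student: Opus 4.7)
The plan is to proceed by contradiction: suppose $(c_i)\in\set{0,1}^{\NN}$ satisfies both \eqref{43} and \eqref{44} yet is neither $0^\infty$ nor $1^\infty$, and derive a contradiction after one case split.

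First I would dispose of the case in which the factor $01$ never appears in $(c_i)$. Since both digits occur, such a sequence must have the form $1^a 0^\infty$ for some finite $a\ge 1$. Applying \eqref{44} at $n=a$ would then force $0^\infty\succ 0(01)^\infty=00101010\cdots$, which is obviously false (the two sequences agree in the first two positions, and then $0<1$). Hence a factor $01$ must occur, so I may fix $n\ge 1$ with $c_n=0$ and $c_{n+1}=1$.

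The heart of the proof is then a straightforward induction on $k\ge 0$ showing $c_{n+2k+1}=1$ and $c_{n+2k+2}=0$ for every $k$. This would force $c_{n+1}c_{n+2}\cdots=(10)^\infty$, directly contradicting the \emph{strict} inequality $c_{n+1}c_{n+2}\cdots\prec(10)^\infty$ that \eqref{43} demands at position $n$, and closing the argument.

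The base case $k=0$ is immediate: $c_{n+1}=1$ by choice, and then \eqref{43} at $n$ cancels the leading digit to give $c_{n+2}c_{n+3}\cdots\prec(01)^\infty$, which forces $c_{n+2}=0$. For the inductive step, assuming $c_{n+2k+1}=1$ and $c_{n+2k+2}=0$, I would first apply \eqref{44} at $n+2k+1$ and match the leading $0$ to obtain $c_{n+2k+3}c_{n+2k+4}\cdots\succ(01)^\infty$. The main obstacle, and essentially the whole content of the lemma, is ruling out $c_{n+2k+3}=0$: this case would demand $c_{n+2k+4}c_{n+2k+5}\cdots\succ(10)^\infty$, while applying \eqref{43} at the new $0$-position $n+2k+3$ simultaneously demands $c_{n+2k+4}c_{n+2k+5}\cdots\prec(10)^\infty$, a contradiction. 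Hence $c_{n+2k+3}=1$, and then \eqref{43} at $n+2k+2$ cancels one digit to yield $c_{n+2k+4}c_{n+2k+5}\cdots\prec(01)^\infty$, forcing $c_{n+2k+4}=0$ and completing the induction. The key technical point is precisely this paired use of \eqref{43} and \eqref{44}: either hypothesis alone is too weak to freeze the alternating pattern, but together they leave no slack.
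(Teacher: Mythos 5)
Your proof is correct, and it reaches the same final contradiction as the paper---some tail of $(c_i)$ is forced to equal $(10)^{\infty}$, violating the strictness of \eqref{43}---but by a genuinely different route. The paper first fixes $k$ with $0^k<c_1\cdots c_k<1^k$ and proves the two-sided estimate $0(01)^{\infty}\prec (c_{n+i})\prec (10)^{\infty}$ for \emph{every} $n\ge k$ (propagating \eqref{43} and \eqref{44} backwards across maximal blocks of equal digits), then argues combinatorially that a sequence with no factor $11$ and at most one factor $00$ must have a tail equal to $(10)^{\infty}$. You instead locate a single occurrence of the factor $01$ (after correctly disposing of the $1^a0^{\infty}$ case) and run a direct digit-by-digit induction that freezes the alternating pattern, with the key step being the clash between \eqref{44} applied one position back and \eqref{43} applied at the hypothetical extra $0$. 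Your version is more elementary and self-contained (no intermediate uniform tail estimate, no block-counting), at the cost of a slightly more delicate bookkeeping of indices; the paper's version isolates the reusable estimate \eqref{45}, which mirrors the structure of the analogous classical lemmas for unique expansions. Both are complete proofs of the lemma.
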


\begin{proof}
Assume on the contrary that there exists a sequence $0^{\infty}<(c_i)<1^{\infty}$ satisfying \eqref{43}--\eqref{44}, and choose a positive integer $k$ such that
$0^k<c_1\cdots c_k<1^k$.
We claim that
\begin{equation}\label{45}
0(01)^{\infty}\prec (c_{n+i})\prec (10)^{\infty}
\qtq{for all}n\ge k.
\end{equation}

Indeed, the first inequality in \eqref{45} holds by our assumption if $c_n=1$.
If $c_n=0$, then there exists an $m<n$ such that $c_m\cdots c_n=10^{n-m}$, and then
\begin{equation*}
c_{n+1}c_{n+2}\cdots
\succeq 0^{n-m}c_{n+1}c_{n+2}\cdots
=c_{m+1}c_{m+2}\cdots
\succ 0(01)^{\infty}
\end{equation*}
by \eqref{44}.
Similarly, the second inequality in \eqref{45} holds by our assumption if $c_n=0$.
If $c_n=1$, then there exists an $m<n$ such that $c_m\cdots c_n=01^{n-m}$, and then
\begin{equation*}
c_{n+1}c_{n+2}\cdots
\preceq 1^{n-m}c_{n+1}c_{n+2}\cdots
=c_{m+1}c_{m+2}\cdots
\prec (10)^{\infty}
\end{equation*}
by \eqref{43}.

It follows from \eqref{45} that $(c_{k+i})$ does not contain any bloc $11$, and that it contains at most one bloc $00$.
Hence there exists an $n\ge k$ such that $(c_{n+i})=(10)^{\infty}$.
But this contradicts the second inequality in \eqref{45}.
\end{proof}

\begin{proof}[Proof of Theorem \ref{t16}]
First we show the equivalence (i) $\Longleftrightarrow$ (ii).
Our proof will also show that either $\uu_S'$ is infinite or $\uu_S'=\set{0^{\infty},1^{\infty}}$.
\color{black}
By Corollary \ref{c13}  a sequence $(c_i)$ belongs to $\uu_S'$ if and only if it satisfies the following two conditions:
\begin{align}
&(c_{n+i})\prec\alpha^0\qtq{whenever}c_n=0,\label{46}
\intertext{and}
&(c_{n+i})\succ\gamma^1\qtq{whenever}c_n=1.\label{47}
\end{align}
If $\alpha^0\succ (10)^{\infty}$ and $\gamma^1\prec (01)^{\infty}$, then the sequences $0^k(10)^{\infty}$ belong to $\uu_S'$ for every $k\in\NN$ because they satisfy the conditions \eqref{46}--\eqref{47}.
Therefore $\uu_S'$ is an infinite set.

Next we show that $\uu_S'=\set{0^{\infty},1^{\infty}}$ in every other case.
If $\alpha^0\preceq (10)^{\infty}$, then   $\gamma^1\succeq 0(01)^\infty$ by Lemma \ref{l42} (ii).
We infer from  \eqref{46}--\eqref{47} that every  $(c_i)\in\uu_S'$ satisfies the assumptions of Lemma \ref{l43}, and hence  $(c_i)\in\set{0^{\infty},1^{\infty}}$.

Similarly, if $\gamma^1\succeq (01)^{\infty}$, then  $\alpha^0\preceq 1(10)^{\infty}$ by Lemma \ref{l42} (iv), and  we deduce from \eqref{46}--\eqref{47} that every $(c_i)\in\uu_S'$ satisfies the relations
\begin{align*}
&(c_{n+i})\prec 1(10)^{\infty}\qtq{whenever}c_n=0,
\intertext{and}
&(c_{n+i})\succ (01)^{\infty}\qtq{whenever}c_n=1.
\end{align*}
Applying Lemma \ref{l43} to the reflection of $(c_i)$ instead of $(c_i)$, we conclude again that $(c_i)\in\set{0^{\infty},1^{\infty}}$.
\medskip

The equivalence (ii) $\Longleftrightarrow$ (iii) follows from Lemma \ref{l42} (i) and (iii).
\end{proof}
For the proof of Theorem \ref{t17} we need one more lemma:

\begin{lemma}\label{l44}
If $S=\set{(d_0,q_0),(d_1,q_1)}$ is a regular alphabet-base system with $d_1\ge 0\ge d_0$, then
\begin{equation*}
q_0\le q_1
\Longrightarrow
\gamma^1\preceq\overline{\alpha^0}
\qtq{and}
q_1\le q_0
\Longrightarrow
\gamma^1\succeq\overline{\alpha^0}.
\end{equation*}
\end{lemma}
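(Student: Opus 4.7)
My plan is to reduce both implications to a single scalar inequality via the reflection principle, and then attack that inequality by direct computation in the spirit of the proof of Lemma \ref{l42}. By Lemma \ref{l23}, the dual system $S'=\set{(-d_1,q_1),(-d_0,q_0)}$ is regular, and its sign hypothesis $d_1'=-d_0\ge 0\ge -d_1=d_0'$ is automatic. A direct check using the definitions \eqref{42} gives the duality identities $\overline{\alpha^0}={\gamma'}^{1}$ and $\overline{\gamma^1}={\alpha'}^{0}$, so that the hypothesis $q_1\le q_0$ in $S$ (which is $q_0'\le q_1'$ in $S'$) converts the second implication of the lemma into the first implication applied to $S'$; it therefore suffices to prove only the first implication.

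For the first implication I assume $q_0\le q_1$ and claim it is enough to verify the scalar inequality
\[
\pi(\overline{\alpha^0})\ge y_1:=\frac{q_1(d_0+\Lambda)}{q_0}-d_1.
\]
Indeed, $\overline{\alpha^0}$ is co-infinite because $\alpha^0$ is infinite (Proposition \ref{p25}(ii)), so once the inequality holds $\overline{\alpha^0}$ is a co-infinite superexpansion of $y_1$, and the minimality in the definition $\gamma^1=m(y_1)$ from Proposition \ref{p25}(iv) immediately forces $\gamma^1\preceq\overline{\alpha^0}$.

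The main obstacle is the scalar inequality itself, which becomes an equality when $q_0=q_1$ (matching the classical identity $\gamma^1=\overline{\alpha^0}$ in the equal-base case). I would mimic the strategy of Lemma \ref{l42}(i): use the identities $\pi(0\alpha^0)=(d_1+\lambda)/q_1$ and $\pi(1\gamma^1)=(d_0+\Lambda)/q_0$ to rewrite the inequality as $\pi(1\overline{\alpha^0})\ge\pi(01^\infty)$, and then try to factor $\pi(\overline{\alpha^0})-y_1$ as the product of $(q_1-q_0)$ with an expression rendered nonnegative by the sign condition $d_1\ge 0\ge d_0$. The delicate point is that, for $q_0\ne q_1$, $\pi(\overline{\alpha^0})$ has no closed-form algebraic link to $x_0=\pi(\alpha^0)$; the symmetric identity $\pi(\alpha^0)+\pi(\overline{\alpha^0})=(d_0+d_1)/(q-1)$ that trivialises the equal-base case breaks. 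I expect the cleanest route is to iterate the single-digit recursion $\pi(\overline{\alpha^0})=(d_0+\pi(\overline{\alpha^0_2\alpha^0_3\cdots}))/q_0$ along the blocks of $\alpha^0$ supplied by the quasi-greedy characterization of Proposition \ref{p33}(ii), and assemble the block-by-block contributions into a telescoping sum whose overall sign is controlled by $q_1-q_0$.
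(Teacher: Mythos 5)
Your reduction is sound and coincides with the paper's: the second implication follows from the first by passing to the dual system (the duality identities $\overline{\alpha^0}={\gamma'}^{1}$, $\overline{\gamma^1}={\alpha'}^{0}$ do check out via Lemma \ref{l23}), and once the scalar inequality $\pi(\overline{\alpha^0})\ge \pi(\gamma^1)$ is known, the conclusion $\gamma^1=m(\pi(\gamma^1))\preceq\overline{\alpha^0}$ follows because $\overline{\alpha^0}$ is a co-infinite superexpansion of $\pi(\gamma^1)$ and $m$ selects the lexicographically smallest one. This is exactly how the paper closes the argument.

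However, there is a genuine gap: the scalar inequality itself, which is the entire substance of the lemma, is never proved. You correctly identify the obstruction (for $q_0\ne q_1$ there is no closed-form relation between $\pi(\alpha^0)$ and $\pi(\overline{\alpha^0})$), but your proposed remedies remain at the level of intentions: ``I would mimic,'' ``try to factor,'' ``I expect the cleanest route is\dots a telescoping sum whose overall sign is controlled by $q_1-q_0$.'' The rewriting as $\pi(1\overline{\alpha^0})\ge\pi(01^\infty)$ is a tautological reformulation that does not advance matters, and it is not at all clear that a block decomposition along the quasi-greedy structure of $\alpha^0$ (in the style of Proposition \ref{p33}) produces a telescoping sum with a definite sign. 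What the paper actually does is different and more elementary: writing $\alpha^0=(j_i)$, it expresses
\begin{equation*}
\pi(\gamma^1)-\pi(\overline{\alpha^0})
=\frac{q_1}{q_0}\sum_{i=1}^\infty\left(\frac{d_1}{q_1^i}
-\frac{d_{j_i}}{q_{j_1}\cdots q_{j_i}}\right)
+\sum_{i=1}^\infty\left(\frac{d_0}{q_0^i}
-\frac{d_{\overline{j_i}}}{q_{\overline{j_1}}\cdots q_{\overline{j_i}}}\right)
\end{equation*}
and proves the required sign \emph{term by term}: after noting $j_1=1$ (so the first term is an equality), each remaining term reduces, for some $0\le k\le i-1$, to
\begin{equation*}
\left(\frac{1}{q_0^{i-1-k}}-\frac{1}{q_1^{i-1-k}}\right)
\left(\frac{d_1}{q_1^k}-\frac{d_0}{q_0^k}\right)\ge 0,
\end{equation*}
which is where both hypotheses $q_0\le q_1$ and $d_1\ge 0\ge d_0$ enter. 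Without this (or an equivalent) computation your argument does not establish the lemma; everything before and after it is correct framing.
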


\begin{proof}
We infer from \eqref{42} that
\begin{equation*}
\pi(\alpha^0)
=\frac{q_0}{q_1}\left(d_1+\frac{d_0}{q_0-1}\right)-d_0
\end{equation*}
and
\begin{equation*}
\pi(\gamma^1)
=\frac{q_1}{q_0}\left(d_0+\frac{d_1}{q_1-1}\right)-d_1
=\frac{q_1}{q_0}\left(\frac{d_1}{q_1-1}-\pi(\alpha^0)\right)+\frac{d_0}{q_0-1}.
\end{equation*}
Writing $\alpha^0=(j_i)$ and using the equalities
\begin{equation*}
\pi(\alpha^0)
=\sum_{i=1}^\infty \frac{d_{j_i}}{q_{j_1}q_{j_2}\cdots q_{j_i}}
\qtq{and}
\pi\left(\overline{\alpha^0}\right)=\sum_{i=1}^\infty \frac{d_{\overline{j_i}}}{q_{\overline{j_1}}q_{\overline{j_2}}\cdots q_{\overline{j_i}}},
\end{equation*}
hence we deduce the following identity:
\begin{equation*}\label{48}
\pi(\gamma^1)-\pi(\overline{\alpha^0})
=\frac{q_1}{q_0}\sum_{i=1}^\infty\left(\frac{d_1}{q_1^i}
-\frac{d_{j_i}}{q_{j_1}q_{j_2}\cdots q_{j_i}}\right)
+\sum_{i=1}^\infty\left(\frac{d_0}{q_0^i}
-\frac{d_{\overline{j_i}}}{q_{\overline{j_1}}q_{\overline{j_2}}\cdots q_{\overline{j_i}}}\right).
\end{equation*}

Using this identity we prove the implication
\begin{equation}\label{49}
q_0\le q_1
\Longrightarrow
\pi(\gamma^1)\le\pi(\overline{\alpha^0}).
\end{equation}
It suffices to establish
for every $i\ge 1$ the inequality
\begin{equation}\label{410}
\frac{d_0}{q_0^i}
+\frac{q_1}{q_0}\cdot\frac{d_1}{q_1^i}
\le
\frac{d_{\overline{j_i}}}{q_{\overline{j_1}}q_{\overline{j_2}}\cdots q_{\overline{j_i}}}
+
\frac{q_1}{q_0}\cdot\frac{d_{j_i}}{q_{j_1}q_{j_2}\cdots q_{j_i}}.
\end{equation}
First we observe that $j_1=1$.
Indeed, in case $j_1=0$ we would infer from Proposition \ref{p25} (ii) that $\alpha^0=0^{\infty}$, and therefore $\pi(\alpha^0)=\lambda$, contradicting Lemma \ref{l31}.
Since $j_1=1$, \eqref{410} reduces to an equality for $i=1$, while for $i\ge 2$ it may be simplified to
\begin{equation*}
\frac{d_0}{q_0^{i-1}}
+\frac{d_1}{q_1^{i-1}}
\le
\frac{d_{\overline{j_i}}}{q_{\overline{j_2}}\cdots q_{\overline{j_i}}}
+
\frac{d_{j_i}}{q_{j_2}\cdots q_{j_i}},
\end{equation*}
and then to
\begin{equation*}
\frac{d_0}{q_0^{i-1}}
+\frac{d_1}{q_1^{i-1}}
\le
\frac{d_0}{q_0^kq_1^{i-1-k}}
+
\frac{d_1}{q_1^kq_0^{i-1-k}}
\end{equation*}
with a suitable integer $0\le k\le i-1$, depending on $i$.
(In case $j_i=0$ we exchange the two fractions on the right hand side.)
The required inequality follows by observing that
\begin{equation*}
\frac{d_0}{q_0^kq_1^{i-1-k}}
+
\frac{d_1}{q_1^kq_0^{i-1-k}}
-\frac{d_0}{q_0^{i-1}}
-\frac{d_1}{q_1^{i-1}}
=\left(\frac{1}{q_0^{i-1-k}}-\frac{1}{q_1^{i-1-k}}\right)
\left(\frac{d_1}{q_1^k}-\frac{d_0}{q_0^k}\right)\ge 0
\end{equation*}
by our assumptions $q_0\le q_1$ and $d_0\le 0\le d_1$.

Since $m(\pi(\overline{\alpha^0})$ is the smallest co-infinite expansion of $\pi(\overline{\alpha^0})$, applying Proposition \ref{p25} (iv)  we infer from \eqref{49} the implication.
\begin{equation*}
q_0\le q_1
\Longrightarrow
\gamma^1
=m(\pi(\gamma^1))
\preceq m(\pi(\overline{\alpha^0}))
\preceq \overline{\alpha^0}.
\end{equation*}
The case $q_0\ge q_1$ of the lemma hence follows by reflection.
\end{proof}

\begin{remark}
If $q_0=q_1$, then the proof of  Lemma \ref{l44}  remains valid without any assumption on $d_0$ and $d_1$.
\end{remark}

\begin{proof}[Proof of Theorem \ref{t17}]
The system is regular by Lemma \ref{l41}.
Assume that $q_0\le q_1$.
If
\begin{equation*}
q_0\le 1+\frac{1}{q_1},
\end{equation*}
then $\alpha^0\preceq (10)^\infty$ by Lemma \ref{l42} (i), and then
$\uu_S'=\set{0^{\infty},M^{\infty}}$ by Theorem \ref{t16}.
Otherwise,
\begin{equation*}
\alpha^0\succ (10)^\infty\qtq{and}\gamma^1\prec (01)^\infty
\end{equation*}
by Lemma \ref{l42} (i) and Lemma \ref{l44}, and then $\uu_S'$ is an infinite set by Theorem \ref{t16}.

The proof in case $q_0\ge q_1$ is analogous, by using Lemma \ref{l42} (iii).
\end{proof}


\begin{thebibliography}{10}

\bibitem{BK2007}
C. Baiocchi, V. Komornik,
\emph{Greedy and quasi-greedy expansions in non-integer bases.}
arXiv: 0710.3001v1.

\bibitem{B2014}
S. Baker,
\emph{Generalized golden ratios over integer alphabets.}
Integers, 14 (2014), A15, 28 pp.
\bibitem{DK2009}
M. de~Vries, V. Komornik,
\emph{Unique expansions of real numbers.}
Adv. Math. 221 (2) (2009), 390--427.

\bibitem{DK2010}
M. de~Vries, V. Komornik,
\emph{A two-dimensional univoque set.}
Fund. Math. 212 (2011), 175--189.

\bibitem{DKL2016}
M. de~Vries, V. Komornik,  P. Loreti,  \emph{Topology of the set of univoque bases.}
Topology Appl.  {205} (2016), 117--137.

\bibitem{EHJ1991}
P. Erd\H{o}s, M. Horv\'ath,  I. Jo\'o,
{\em On the uniqueness of the expansions  $1=\sum q^{-n_i}$.}
{Acta Math. Hungar.} 58 (1991),  333--342.

\bibitem{EJK1990}
P. Erd\H{o}s, I. Jo\'{o},  V. Komornik,  \emph{Characterization of the unique expansions $1=\sum_{i=1}^\infty q^{-n_i}$ and related problems.}
Bull. Soc. Math. France  118 (3) (1990), 377--390.

\bibitem{GS2001}
P. Glendinning, N. Sidorov,
\emph{Unique representations of real
  numbers in non-integer bases.}
Math. Res. Lett.  8 (4) (2001),  535--543.

\bibitem{K2011}
V.~Komornik,
\emph{Expansions in noninteger bases.} Integers  11B  (2011), A9, 30 pp.

\bibitem{KKL2017}
V. Komornik, D. Kong,   W. Li,  \emph{Hausdorff dimension of univoque sets and devil's staircase.}
Adv. Math.  305 (2017), 165--196.

\bibitem{KL1998}
V. Komornik,  P. Loreti,
\emph{Unique developments in non-integer
  bases.}
Amer. Math. Monthly  {105} (7) (1998),  636--639.

\bibitem{KL2002}
V. Komornik, P. Loreti,
{\em Subexpansions, superexpansions and uniqueness properties in noninteger bases.}
Period. Math. Hungar. 44 (2) (2002), 195--216.

\bibitem{KL2007}
V. Komornik, P. Loreti,
\emph{On the topological structure of univoque sets.}
J. Number Theory  122 (1) (2007), 157--183.


\bibitem{KLZ2017}
D. Kong, W. Li,   Y. Zou,
\emph{On small bases which admit points with two expansions.}
J. Number Theory  173 (2017), 100--128.

\bibitem{Li2019}
Yao-Qiang Li,
\emph{Expansions in multiple bases,}
arXiv:1910.13990v2 [math.DS], July 21, 2020.


\bibitem{Neunhauserer2019}
J. Neunhäuserer,
\emph{Non-uniform expansions of real numbers,}
arXiv:1909.04414v2 [math.DS], February 12, 2020.

\bibitem{Neunhauserer2019b}
J. Neunhäuserer,
\emph{Expansions of real numbers with respect to two integer bases.}
arXiv:2002.10824 [math.DS], February 25, 2020.

\bibitem{Parry1960}
W. Parry,
{\em On the $\beta$-expansion of real numbers.}
Acta  Math. Hungar. 11 (1960), 401--416.

\bibitem{Ped2005}
M. Pedicini,
\emph{Greedy expansions and sets with deleted digits.}
Theoret. Comput. Sci. 332 (2005), 1--2, 313--336.

\bibitem{Renyi1957}
A. R\'enyi,
{\em Representations for real numbers and their ergodic properties.}
Acta  Math. Hungar. 8 (1957), 477--493.


\bibitem{ZK2015}
Y. Zou, D. Kong,
\emph{On a problem of countable expansions.}
J. Number Theory  158 (2016),  134--150.

\bibitem{ZWLB2016}
Y. Zou, L. Wang, J. Lu, S. Baker,
\emph{On small bases for which 1 has countably many expansions.}
Mathematika  62 (2016), 362--377.

\bibitem{ZLLK2020}
Y. Zou, J. Li, J. Lu, V. Komornik, \emph{Univoque graphs for non-integer base expansions.}
Sci. China Math. (2021), https://doi.org/10.1007/s11425-020-1763-5 .

\end{thebibliography}
\end{document}